%% file: Current.tex
\documentclass[11pt]{article}
\usepackage{authblk}
\usepackage{amsmath,amssymb,amsthm,mathtools}
\usepackage{geometry}
\usepackage{hyperref}
\usepackage{enumitem}
\usepackage{booktabs,longtable,array}
\usepackage{graphicx}
\usepackage{tikz,pgf}
\newtheorem{theorem}{Theorem}
\newtheorem{lemma}{Lemma}
\newtheorem{proposition}{Proposition}
\newtheorem{corollary}{Corollary}
\newtheorem{definition}{Definition}
\newtheorem{remark}{Remark}

\DeclareMathOperator{\relint}{relint}
\DeclareMathOperator{\lin}{lin}

\DeclareMathOperator{\id}{id}
\DeclareMathOperator{\spanop}{span}

\DeclareMathOperator{\blkdiag}{blkdiag}
\DeclareMathOperator{\spec}{spec}
\DeclareMathOperator{\Repart}{Re}

\newcommand{\ones}{\mathbf 1}
\newcommand{\N}{\mathbb{N}}	% Natürliche Zahlen
\newcommand{\R}{\mathbb{R}}	% Reelle Zahlen
\newcommand{\Qge}{Q^{\ge}}
\newcommand{\Qle}[1]{Q_{#1}^{\le}}

\newcommand{\load}[1]{\bar #1 }

\newcommand{\G}{\mathcal{G}}
\newcommand{\D}{\mathcal{D}}
\newcommand{\Rp}{\mathbb R_+}

\usepackage{array}
\usepackage{framed,bm}

\newcommand{\norm}[1]{\lVert #1 \rVert}
\DeclareMathOperator{\conv}{conv}

\newcommand{\C}{\mathcal{C}}

\newcommand{\ip}[2]{\left\langle #1,#2\right\rangle}

\newcount\colveccount
\newcommand*\colvec[1]{
        \global\colveccount#1
        \begin{pmatrix}
        \colvecnext
}
\def\colvecnext#1{
        #1
        \global\advance\colveccount-1
        \ifnum\colveccount>0
                \\
                \expandafter\colvecnext
        \else
                \end{pmatrix}
        \fi
}
\usepackage{enumitem}

\newcommand*\diff{\,\mathop{}\!\mathrm{d}} % the upright d for integrals
\DeclarePairedDelimiterX{\scalar}[2]{\langle}{\rangle}{#1, #2} % Scalar product
\usepackage{dsfont} % fuer charakteristische 1

\title{Monotonicity and Frank-Wolfe Dynamics \\ in Atomic Splittable Congestion Games}
\author{Tobias Harks}
\affil{\small University of Passau \\
	\href{mailto:tobias.harks@uni-passau.de}{\texttt{tobias.harks@uni-passau.de}}}
	
\begin{document}
\maketitle

\begin{abstract}
We study universal monotonicity and Frank--Wolfe stability properties for atomic splittable congestion games.  Specifically, we characterize the largest resource cost class for which the associated variational inequality operator is monotone for every game.  This characterization is given by a  curvature inequality involving the first two derivatives of the allowable cost functions and the number of players.  Our framework yields exact characterizations for universal monotonicity, strict monotonicity, and strong monotonicity; the strict and strong variants require corresponding stricter curvature conditions.
We then draw a perhaps surprising connection to learning dynamics in atomic splittable congestion games. We show that the very same curvature condition also characterizes universal \emph{local and global stability} of the Euclidean-regularized Frank--Wolfe dynamics on arbitrary convex strategy spaces, provided the cost class is closed under positive affine transformations. Finally, we study games on simplices  and show that an interior equilibrium of the regularized Frank--Wolfe dynamic is locally exponentially stable, even without the curvature condition.
\end{abstract}
\clearpage
\section{Introduction}
Atomic splittable congestion games are a fundamental model in the area of algorithmic game theory
and economics. In this model,  every player has a demand that she may split 
fractionally over allowable subsets of resources. The cost of a resource is then a function
of the total load assigned to it. This class of games has  numerous
applications, e.g.,  in modeling packet-routing in communication networks (see Orda et al.~\cite{Orda93} and Korilis et al.~\cite{Korilis1997,KorilisLO95}), traffic networks (Haurie and Marcotte~\cite{Haurie85}) and logistics networks (Cominetti et al.~\cite{CCS06}).
At the same time, atomic splittable congestion games are a quite intricate class of games
for which even today several key research questions are open.
For example, Bhaskar et al.~\cite{Bhaskar09} constructed a
network instance using very steep convex cost functions for which
equilibria are not unique. 
Altman et al.~\cite{Altman02competitiverouting}  have shown that equilibria are unique for 
well-behaved cost functions of monomial type plus constant $c(x)=a x^d+b, a,b\geq 0$, where the exponent
may take values in $d\in \{1,2, 3\}$. For $d\geq 4$ assuming even general polynomial
cost functions with nonnegative coefficients, no non-uniqueness example is known.\footnote{It is not possible to
replicate the example of Bhaskar et al.~\cite{Bhaskar15} using polynomial cost functions
with nonnegative coefficients even allowing arbitrary high degree, see~G\"ottl~\cite{Goettl20}.}
Only if special assumptions on the strategy spaces are made, uniqueness
can be shown (Bhaskar et al.~\cite{Bhaskar09,Bhaskar15}, Harks and Timmermans~\cite{HarksT18} and Richman and Shimkin~\cite{Richman07}). 
From the perspective of equilibrium computation, the landscape
is also fragmented.   For \emph{affine} player-independent cost functions, there
exists a convex potential whose global minima are pure Nash equilibria (Cominetti et al.~\cite{CCS06}) and, thus, convex programming methods
can be used to compute $\epsilon$-approximate
equilibria in polynomial time.\footnote{For affine player-specific costs, Klimm and Warode~\cite{KlimmWarode2025} established first PPAD-hardness results.} For nonlinear polynomial cost functions
no algorithmic or hardness result is known.\footnote{\cite{BhaskarL18} derive NP-hardness
of equilibrium computation with further properties such as a cost bound.} Only for restricted strategy
spaces, positive results are known, see Huang~\cite{Huang11}, Bhaskar and Lolakapuri~\cite{BhaskarL18} and Harks and Timmermans~\cite{HarksT22}.

Let us now turn to the area of \emph{learning algorithms} for congestion games. 
For nonatomic congestion games with separable costs, Wardrop equilibria are characterized both as minimizers of the Beckmann potential~\cite{beckmann1956} and as solutions of a monotone variational inequality (VI)~\cite{facchinei2003}. Consequently, convergence of learning algorithms can be established either through descent arguments on the potential function or through monotonicity of the associated cost operator, see Smith dynamics~\cite{smith1984,FischerRV10}, Brown--von Neumann--Nash (BNN) dynamics~\cite{brown1951,neumann1945,sandholm2001}, mirror descent~\cite{mertikopoulos2018convergence,nemirovski1983,beck2017}, multiplicative weights~\cite{freund1997,arora2012}, no-regret learning~\cite{hart2000,cesa2006}, and fictitious play~\cite{brown1951,robinson1951,cominetti2010}. Hofbauer and Sandholm~\cite{hofbauer09} 
and Sorin and Chen~\cite{sorin2016finite} 
discuss several of the above dynamics  
and~\cite{sorin2016finite}  even in the context of atomic splittable congestion games. For convergence of learning dynamics, however, both  assume a priori that the underlying game is monotone (called stable in~\cite{hofbauer09} and dissipative in~\cite{sorin2016finite}).~\footnote{Mertzios~\cite{mertzios2009fast} considered the case of two player atomic-splittable congestion games and derived convergence results for 
series-parallel networks.} 
Hadikhanloo et al.~\cite{Hadikhanloo2021} provide an elegant framework unifying several of these results. Chen et al.~\cite{ChenKA24}
recently derived a quite general convergence results for best-response VI-dynamics (in the spirit of Frank-Wolfe dynamics~\cite{Frank56}) under the monotone VI condition. 

For unweighted atomic congestion games, Rosenthal's exact potential~\cite{rosenthal1973} implies that every sequence of improving responses terminates after finitely many steps, establishing the finite improvement property and convergence of asynchronous better-response dynamics to a pure Nash equilibrium~\cite{Awerbuch08,monderer1996,fabrikant2004}. The potential-based perspective has been further applied for analyzing a variety of adaptive learning procedures including logit dynamics~\cite{blume1993,young1993}, multiplicative weights~\cite{freund1997,arora2012,kleinberg2009multiplicative}, no-regret learning~\cite{blum2008regret,hart2000,cesa2006}, regret matching~\cite{hart2000}, and fictitious play~\cite{brown1951,robinson1951}. These algorithms typically converge to approximate Nash equilibria, correlated equilibria, or stationary distributions rather than exhibiting finite-time convergence to pure Nash equilibria. 
Overall, learning in potential congestion games and monotone nonatomic congestion games is by now well understood.

\subsection{Our Results and Techniques}
As potential functions are unavailable for atomic splittable congestion games
already for games with quadratic costs, a more promising way to obtain convergence
results for learning dynamics is through VI monotonicity.  
The first topic of this paper is to understand under which structural conditions on the resource cost functions, the resulting game is 
monotone and allows the application of known convergence results. 

Let $\D(\R_+)$ be the set of $C^2$ cost functions on
$\R_+$ satisfying
\[
 f'(x)\ge0
 \qquad\text{and}\qquad
 \Theta_f(x):=2f'(x)+x f''(x)\ge0
 \quad\forall x\in\R_+.
\]
Equivalently, $f$ is nondecreasing and the individual resource cost
$x\mapsto x f(x)$ is convex.  This is the natural convexity assumption in
atomic splittable congestion games, because it guarantees convexity of each
player's optimization problem in her own resource usage.  We say that a class
of cost functions $\C\subset\D(\R_+)$ is $n$-player VI-monotone if every atomic
splittable congestion game with $n$ players and cost functions in $\C$ admits a
monotone VI description.

Our first question is: what is the largest class of $n$-player VI-monotone cost
functions?  To state the answer, fix $n\ge 2$ and set
\[
\gamma_n:=2+\frac{4}{n-1}=\frac{2(n+1)}{n-1}.
\]
For $f\in\D(\Rp), x\in \R_+$, define the upper curvature margin
\[
\Delta_f(x):=\gamma_n f'(x)-xf''(x).
\]
We use the following cost-function classes:
\begin{align*}
Q_n(\Rp)
&:=
\left\{
f\in\D(\Rp):
\Delta_f(x)\ge0\ \forall x\in\Rp
\right\},\\
Q_n^{\mathrm{sm}}(\mathbb R_+)&:=
\left\{
f\in \mathcal D(\mathbb R_+):
\Delta_f(s)>0
\text{ and }
\Theta_f(s)>0
\text{ for all }s\in\mathbb R_+
\right\}\\
\D_{\rm mc-si}(\Rp)
&:=
\left\{
f\in\D(\Rp):
x\mapsto f(x)+x f'(x)
\text{ is strictly increasing on every non-degenerate interval}
\right\}.
\end{align*}
Here an interval with boundary points $a,b\in\R_+$ is called nondegenerate if
$a<b$.  The classes satisfy
\[
Q_n^{\mathrm{sm}}(\mathbb R_+)
\subseteq
Q_n(\Rp)\cap\D_{\rm mc-si}(\Rp)
\subseteq
Q_n(\Rp).
\]
The first inclusion follows from the  lower bounds on both
$\Delta_f$ and $\Theta_f$, while the second is immediate from the definition.
Our first result gives a complete characterization of the corresponding
universal VI-monotonicity notions, which are formally defined in
Section~\ref{sec:vi-operator}.

\begin{theorem}[Universal monotonicity classes]\label{thm:classification}
Let $\C\subseteq\D(\Rp)$. Then:
\begin{enumerate}
\item\label{enum:char1} $\C$ is $n$-player VI-monotone if and only if
$\C\subseteq Q_n(\Rp)$.
\item\label{enum:char2} $\C$ is $n$-player strictly VI-monotone if and
only if $\C\subseteq Q_n(\Rp)\cap \D_{\rm mc-si}(\Rp)$.
\item\label{enum:char3} $\C$ is $n$-player strongly VI-monotone if and only if
$\C\subseteq Q_n^{\mathrm{sm}}(\mathbb R_+)$.
\end{enumerate}
\end{theorem}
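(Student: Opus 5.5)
The plan is to reduce universal monotonicity of the game operator to a pointwise matrix condition at each resource, and then study that condition as a function of $f'$, $f''$ and the load profile. For an atomic splittable game, player $i$'s marginal cost on resource $e$ is
\[
F_{i,e}(x)=c_e(\ell_e)+x_{i,e}c_e'(\ell_e),\qquad \ell_e=\sum_j x_{j,e}.
\]
The VI operator $F$ is separable across resources, so its Jacobian is block diagonal. The block for resource $e$ is the $n\times n$ matrix
\[
J_e=c_e'(\ell)\,(I+\ones\ones^\top)+c_e''(\ell)\,x_e\ones^\top ,
\]
where $x_e=(x_{1,e},\dots,x_{n,e})$.

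Monotonicity on the convex strategy space is equivalent to positive semidefiniteness of the symmetric part $S_e=\tfrac12(J_e+J_e^\top)$ at all feasible points. For the necessity direction I will need this for all directions, which I justify by constructing games where the strategy space contains a full-dimensional neighbourhood in the relevant coordinates. Parallel-link games with suitable demands achieve this, since any $n$-vector of loads $x_e\ge0$ on one resource can be realized in the relative interior.

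\textbf{Step 1: reduction to a scalar inequality.} Fix $a=f'(\ell)\ge0$, $b=f''(\ell)$ and $x\in\R_+^n$ with $\sum x_i=\ell$. I will compute
\[
\min_{\norm{v}=1} v^\top S v=\min_{\norm{v}=1}\; a\norm{v}^2+a(\ones^\top v)^2+b\,(x^\top v)(\ones^\top v).
\]
This is a problem on the two-dimensional span of $\ones$ and $x$, plus the orthogonal complement, where the value is $a\ge0$. I expect the worst case over $x$ with fixed $\ell$ to be attained at the equal split $x=\tfrac{\ell}{n}\ones$ for the sign of $b$ that hurts.
\begin{itemize}
\item For $b>0$, the indefinite rank-two term $b\,x\ones^\top$ is worst when $x$ is "most misaligned" with $\ones$ relative to its size. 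Optimizing the $2\times2$ quadratic form over $x$ should produce the threshold $\ell b\le\gamma_n a$, with $\gamma_n=2(n+1)/(n-1)$ arising from the discriminant condition.
\item For $b<0$, the symmetric term is pushed in the positive direction except through $\Theta_f\ge0$, which is already part of $\D(\Rp)$.
\end{itemize}
Carrying out this optimization carefully, including maximizing over the shape of $x$, is the main obstacle. My first attempt will parametrize $x=\alpha\ones+\beta w$ with $w\perp\ones$ and reduce to a quadratic-form determinant condition in $(\alpha,\beta)$.

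\textbf{Step 2: part 1.} If every cost function lies in $Q_n$, the pointwise PSD condition holds. Integrating along segments, $\ip{F(x)-F(y)}{x-y}=\int_0^1 (x-y)^\top J(y+t(x-y))(x-y)\,dt\ge0$, which gives monotonicity. Conversely, if $\Delta_f(x_0)<0$ for some $f$ and $x_0$, I place one resource with cost $f$ in a game whose load profile is the bad split with total $x_0$, together with enough slack resources that the direction $v$ is feasible. Negativity of $v^\top Sv$ then gives nearby points violating monotonicity. At $x_0=0$ the condition reads $f'(0)\ge0$, which is automatic.

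\textbf{Step 3: part 3.} For strong monotonicity I need a uniform positive lower bound on the minimal eigenvalue, which requires strict versions of both margins. The two relevant quantities, at the bad split and at the $b<0$ extreme, turn out to be positive multiples of $\Delta_f$ and $\Theta_f$. Necessity comes from the same constructions: if either quantity vanishes at some point, the minimal eigenvalue is zero there. Scaling the game then rules out any fixed modulus, since the definition is universal over games and strategy spaces.

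\textbf{Step 4: part 2.} The semidefinite form has a kernel only in degenerate directions. Strictness fails exactly when $\ip{F(x)-F(y)}{x-y}=0$ for some $x\ne y$. Following the integral identity, that forces, along a segment, $v$ in the kernel for almost every $t$. I expect the kernel analysis to show this forces $v$ proportional to $\ones$ on some resource with $f(\ell)+\ell f'(\ell)$ constant on an interval; this is the total-load marginal along $\ones$. That yields the $\D_{\rm mc-si}$ condition. For the converse, a flat interval of $f+xf'$ gives a two-point counterexample via moving all players' load uniformly on a single resource in a game with two parallel resources.
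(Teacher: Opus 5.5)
\textbf{Step 1 is the gap.} Your architecture matches the paper's: a block Jacobian $a(I+\ones\ones^\top)+b\,x\ones^\top$ per resource, integration along segments, and local one-resource counterexamples. But the step that carries the whole theorem is left open, and your guess for it points the wrong way.

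For $b>0$ the equal split is the \emph{most benign} load profile. At $x=\frac{\ell}{n}\ones$ the indefinite term equals $\frac{b\ell}{n}(\ones^\top v)^2\ge0$, so no curvature threshold can come from it. That term is linear in $x$ on the simplex $\{x\ge0,\ \ones^\top x=\ell\}$, so the worst case sits at a vertex $x=\ell e_k$.

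The missing idea is the estimate $x^\top v\ge\ell\min_i v_i$, which uses $x\ge0$ essentially. Your parametrization $x=\alpha\ones+\beta w$ without the sign constraint makes the term unbounded below relative to $\ell$. Then minimize $\norm{v}^2$ under $\ones^\top v=1$ and $\min_i v_i=k<0$. This gives $\frac{n-1}{2(n+1)}(\norm{v}^2+1)+k\ge\frac{n(1+k)^2}{2(n+1)}\ge0$, i.e.\ the sharp constant $1/\gamma_n$. The extremal pair is $x=\ell e_1$, $v=(-1,\frac{2}{n-1},\dots,\frac{2}{n-1})$, with $v^\top DF_e v=\Delta_f(\ell)$.

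For $b<0$ you likewise need a concrete bound, $(x^\top v)(\ones^\top v)\le\frac{\ell}{2}(\norm{v}^2+(\ones^\top v)^2)$, to get the margin $\Theta_f/2$. Without these two inequalities, neither the sufficiency in parts 1 and 3 nor the necessity construction is established. The necessity construction needs the correct bad pair, not the equal split.

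\textbf{Step 4 rests on a wrong description of the kernel.} Degenerate directions are generally not along $\ones$. For $n=3$ and $f(x)=x^5$ one has $\Delta_f\equiv0$ and $f+xf'=6x^5$, so $f\in Q_3(\Rp)\cap\D_{\rm mc-si}(\Rp)$. Yet the form vanishes at $z=se_1$, $u=(-1,1,1)$. Where $\Theta_f(s)=0$ with $f''(s)<0$, the null direction is the single-player direction $u=e_k$ at $z=se_k$.

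So sufficiency in part 2 needs an argument that a segment cannot stay degenerate:
\begin{enumerate}
\item From mc-si and $(s^2f')'=s\Theta_f$ one gets $f'>0$ on $(0,\infty)$, which settles the case $\ones^\top u=0$.
\item If $\ones^\top u=\sigma\neq0$, the load sweeps a nondegenerate interval, so $\Theta_f>0$ on a subinterval.
\item There, equality in the lower estimate would force $z(t)=s(t)e_k$ with $u_j=\frac{2\sigma}{n-1}\neq0$ for $j\neq k$. This contradicts $z_j(t)\equiv0$.
\end{enumerate}

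Your necessity construction for part 2 also fails. Moving all players uniformly along $\ones$ on one resource gives $\ip{F(x)-F(y)}{x-y}=t\bigl(\psi(s+nt)-\psi(s)\bigr)$ with $\psi=nf+sf'=(n-1)f+(f+sf')$. This is strictly increasing whenever $f$ is, even if $f+sf'$ is flat; take e.g.\ $f=D-C/s$ with $C>0$ on the flat interval. You need a single active player with all others at zero, so that her marginal cost is exactly $f+sf'$.

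\textbf{Slack resources.} The slack resources in your parallel-link realization carry costs from $\C$ and add their own, possibly positive, terms to the form, which you ignore. The model allows arbitrary compact convex $K_i$, so the paper simply uses intervals $K_i=\conv\{z_i,z_i+\varepsilon u_i\}$ on a single resource.
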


The proof is based on a resource-wise analysis of the Jacobian of the marginal
cost operator.  The key estimate is Lemma~\ref{ineq:main}, which gives both the
lower and the upper extremal bounds for the potentially indefinite term in the
resource-block of the Jacobian of the marginal cost operator.  The lower bound yields the critical upper
curvature threshold $\Delta_f\ge0$, while the assumption $\Theta_f\ge0$ controls
the case of negative curvature $f''<0$.  
For necessity (pars pro toto using the first statement), we show that whenever there is a function $f\in\C$ with $f\notin Q_n(\Rp)$,
we can construct an $n$-player game with a single resource
and convex interval-based strategy spaces for which the resulting marginal cost operator is not monotone.

Theorem~\ref{thm:classification} has immediate implications for learning dynamics.
Consider for instance the best-response inclusion
$\dot x\in \beta(F(x))-x,$
where $x$ is a strategy profile, $F$ is the marginal cost operator and 
$\beta(z)
:=
\arg\min_{y\in K}
\left\{
\langle z,y\rangle
\right\}$
and $K$ denotes the convex and compact strategy space of the game, respectively.
Note that the $\beta$ function is exactly the linear minimization oracle used in the Frank--Wolfe (FW), or
conditional-gradient, method.
Using a recent result of Chen et al.~\cite{ChenKA24}(Thm.1), we obtain
the following result.
\begin{corollary}\label{cor:FW}
For any atomic splittable congestion game with cost functions in $Q_n(\Rp)$,
the (non-empty) equilibrium set is globally asymptotically stable under the FW-dynamic.
\end{corollary}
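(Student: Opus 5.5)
The plan is to reduce the corollary to Theorem~\ref{thm:classification}, part~\ref{enum:char1}, and then invoke the convergence theorem of Chen et al.~\cite{ChenKA24} (Thm.~1) for best-response (Frank--Wolfe) dynamics of monotone variational inequalities.

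\textbf{Step 1: monotonicity.} Fix an atomic splittable congestion game with $n$ players, a convex compact joint strategy space $K$, and resource costs in $Q_n(\Rp)$. Apply part~\ref{enum:char1} of Theorem~\ref{thm:classification} with $\C=Q_n(\Rp)$, which is trivially contained in $Q_n(\Rp)$. This shows that the marginal cost operator $F$ of the game is monotone on $K$:
\[
\ip{F(x)-F(y)}{x-y}\ge 0 \qquad \text{for all } x,y\in K.
\]

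\textbf{Step 2: equilibria coincide with VI solutions.} Each player's cost is convex in her own strategy, since $\Theta_f\ge 0$ makes every $x\mapsto xf(x)$ convex. Hence the first-order conditions are sufficient as well as necessary, so the Nash equilibria are exactly the solutions of $\mathrm{VI}(F,K)$.

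\textbf{Step 3: nonemptiness of the equilibrium set.} $F$ is continuous, because the cost functions are $C^2$, and $K$ is nonempty, convex and compact. The Hartman--Stampacchia theorem therefore gives a solution of $\mathrm{VI}(F,K)$, i.e., an equilibrium. By monotonicity and continuity of $F$, the solution set also equals the solution set of the Minty (weak) VI. That set is an intersection of closed half-spaces with $K$, so the equilibrium set is convex and compact.

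\textbf{Step 4: apply Chen et al.} Next I would check the hypotheses of \cite{ChenKA24}, Thm.~1, for the differential inclusion $\dot x\in\beta(F(x))-x$:
\begin{itemize}
\item $K$ is convex and compact;
\item $F$ is continuous and monotone on $K$;
\item $\beta$ is the linear minimization oracle over $K$, which is nonempty-, convex- and compact-valued and upper hemicontinuous by Berge's maximum theorem.
\end{itemize}
Under these hypotheses, solutions of the inclusion exist, remain in $K$, and the equilibrium set is globally asymptotically stable. In the Chen et al.\ argument this comes from a gap-type Lyapunov function such as $\ip{F(x)}{x-\beta(F(x))}$, whose decrease along trajectories follows from monotonicity of $F$.

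\textbf{Main obstacle.} The step I expect to need care is matching the exact formulation in \cite{ChenKA24}. Their theorem may be stated for a single-agent monotone VI rather than for the product-structured game operator. It may also require $F$ to be defined on a neighbourhood of $K$, for instance because $f$ lives only on $\Rp$ while loads are nonnegative anyway.

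I would handle the first point by noting that the joint best response over $K=\prod_i K_i$ decomposes into the players' individual best responses, because $\ip{z}{y}$ is separable across players. So the FW-dynamic in the corollary is exactly their VI dynamic on $K$. If their result additionally needs Lipschitz continuity of $F$, this holds on the compact $K$ because the costs are $C^2$.
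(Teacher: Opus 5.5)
Your proposal is correct and follows the paper's route: monotonicity of $F$ from Theorem~\ref{thm:classification}, part~\ref{enum:char1}, then Chen et al.~\cite{ChenKA24}, Thm.~1, for the inclusion $\dot x\in\beta(F(x))-x$; your Steps~2--3 only spell out what the paper takes from Lemma~\ref{equilibriumcondition} and the Kakutani existence argument. One small precision: convexity of a player's own cost $r\mapsto r f(r+t)$ with $t\ge 0$ needs both $f'\ge 0$ and $\Theta_f\ge 0$, via the case split on the sign of $f''$ given in Section~\ref{sec:ascg}, and does not follow from convexity of $x\mapsto xf(x)$ alone.
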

If the underlying
VI is strongly monotone, Chen et al.~\cite{ChenKA24}(Thm.3) gives an even stronger convergence
result allowing for various perturbations in payoffs and strategies. This  convergence result directly applies to $n$-player atomic splittable congestion games with cost functions in $Q_n^{\mathrm{sm}}(\mathbb R_+)$.

For Euclidian regularized FW-dynamics, we get the following result.
\begin{proposition}\label{prop:RFW}
For any atomic splittable congestion game with $n$-players and cost functions in $Q_n(\Rp)$,
the unique regularized equilibrium of the Euclidian regularized FW-dynamics is globally asymptotically stable.
\end{proposition}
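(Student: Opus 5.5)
We first need to fix what the Euclidean regularized FW-dynamic is. The natural choice, and the one we adopt, is to replace the linear minimization oracle $\beta$ by the regularized oracle
\[
\beta_\eta(z):=\arg\min_{y\in K}\Bigl\{\langle z,y\rangle+\tfrac{1}{2\eta}\norm{y}^2\Bigr\}=\Pi_K(-\eta z),
\]
with $\eta>0$. The dynamic is then $\dot x=\beta_\eta(F(x))-x$. Its rest points are exactly the fixed points $x^*=\Pi_K(-\eta F(x^*))$. Equivalently, $x^*$ solves the VI with the perturbed operator $G(x):=F(x)+\tfrac1\eta x$: we need $\langle F(x^*)+\tfrac1\eta x^*,y-x^*\rangle\ge0$ for all $y\in K$.

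By Theorem~\ref{thm:classification}, part~\ref{enum:char1}, cost functions in $Q_n(\Rp)$ make $F$ monotone on $K$. Hence $G$ is $\tfrac1\eta$-strongly monotone and continuous on the compact convex set $K$. Existence and uniqueness of the regularized equilibrium $x^*$ then follow from standard VI theory: strong monotonicity gives uniqueness, and compactness with continuity gives existence.

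For global asymptotic stability we use the Lyapunov function $V(x)=\tfrac12\norm{x-x^*}^2$. Forward invariance of $K$ holds because $\beta_\eta(F(x))\in K$, so $\dot x$ points into the tangent cone of $K$. Write $y=\beta_\eta(F(x))$. Then
\[
\dot V=\langle x-x^*,y-x\rangle
=\langle x-x^*,y-x^*\rangle-\norm{x-x^*}^2 .
\]
The projection characterizations of $y$ and $x^*$ give
\[
\langle y+\eta F(x),w-y\rangle\ge0
\quad\text{and}\quad
\langle x^*+\eta F(x^*),w-x^*\rangle\ge0
\qquad\text{for all } w\in K.
\]
Take $w=x^*$ in the first and $w=y$ in the second, then add:
\[
\norm{y-x^*}^2\le \eta\langle F(x^*)-F(x),y-x^*\rangle .
\]
This inequality mixes $x$ and $y$, so monotonicity cannot be applied directly. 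The standard fix is to use that the projection is firmly nonexpansive. Writing $T(x):=\Pi_K(x-\eta F(x))$ would give the usual projected dynamics, but our map $x\mapsto\Pi_K(-\eta F(x))$ is the resolvent form $y=(I+\eta(N_K))^{-1}(-\eta F(x))$, and that form is awkward here.

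A cleaner route is a different Lyapunov function. Let
\[
V(x)=\max_{y\in K}\Bigl\{\langle G(x),x-y\rangle-\tfrac{1}{2\eta}\norm{x-y}^2\Bigr\},
\]
a regularized gap function in the style of Fukushima. Its maximizer is $y=\Pi_K\bigl(x-\eta G(x)\bigr)=\Pi_K(-\eta F(x))$, since $x-\eta G(x)=-\eta F(x)$. So the dynamic is exactly $\dot x=y(x)-x$. We have $V\ge0$, with $V=0$ exactly at $x^*$. By Danskin's theorem,
\[
\dot V=\langle G(x)+DG(x)^\top(x-y)-\tfrac1\eta(x-y),\;y-x\rangle .
\]
The $G(x)$ and $-\tfrac1\eta(x-y)$ terms contribute $\langle G(x)-\tfrac1\eta(x-y),y-x\rangle$. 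The optimality of $y$ gives $\langle G(x)-\tfrac1\eta(x-y),y-x\rangle\le0$, because $\langle G(x)-\tfrac1\eta(x-y),w-y\rangle\ge0$ for all $w\in K$, applied with $w=x$. The remaining term is
\[
-\langle DG(x)(y-x),y-x\rangle=-\langle DF(x)(y-x),y-x\rangle-\tfrac1\eta\norm{y-x}^2 .
\]
This term is where Theorem~\ref{thm:classification} enters. Its proof gives a positive semidefinite symmetric part of the Jacobian $DF$ for cost functions in $Q_n(\Rp)$, so this term is at most $-\tfrac1\eta\norm{y-x}^2$. Hence $\dot V\le-\tfrac1\eta\norm{y(x)-x}^2$, which is strictly negative away from $x^*$. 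Compactness of $K$ and LaSalle's invariance principle then give global asymptotic stability of $x^*$.

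The main obstacle is differentiability. $V$ is $C^1$ only when $F$ is $C^1$; this holds since $f\in C^2$ makes $F$ $C^1$. We also need $DF$ to be positive semidefinite along the segment directions $y-x$ inside $K$. Here we must check carefully that the monotonicity established in the proof of Theorem~\ref{thm:classification} is Jacobian-based and holds on all of $K$, including boundary loads $x=0$.
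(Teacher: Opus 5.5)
Your argument is correct and is essentially the paper's proof. Your $\eta$ is the reciprocal of the paper's, so put $\lambda:=1/\eta$; the identity $\lambda\langle x,x-y\rangle-\frac{\lambda}{2}\norm{x-y}^2=\frac{\lambda}{2}\bigl(\norm{x}^2-\norm{y}^2\bigr)$ then shows that your Fukushima-type gap function coincides on $K$ with the paper's Fenchel coupling $h(x)+h^*(-F(x))+\langle F(x),x\rangle$, and your bound $\dot V\le-\lambda\norm{y-x}^2$ reproduces the paper's estimate term by term (optimality of $y$ tested at $w=x$, then $d^\top DF(x)d\ge0$, then LaSalle), while the boundary concern you raise is settled by \eqref{eq:block-lower-bound}, which gives $u^\top DF_e(z)u\ge0$ for every $z\in\R_+^n$, including zero load, and every $u\in\R^n$.
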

The proof (Appendix~\ref{apx:global-conv}) is a Euclidean specialization of the Lyapunov/Fenchel
coupling approach used for regularized best-response dynamics in monotone
games by Hadikhanloo et al.~\cite[Theorem~4.2]{Hadikhanloo2021}.
We include the proof because our setting is a finite player model (instead of population games)
and feasible sets in our model are general compact convex sets.

For uniqueness of equilibria, we obtain the following result.
\begin{corollary}\label{cor:unique}
For any atomic splittable congestion game with $n$-players and cost functions in $Q_n(\Rp)\cap \D_{\rm mc-si}(\Rp)$,
there is a unique equilibrium.
\end{corollary}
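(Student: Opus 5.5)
The plan is to combine part~\ref{enum:char2} of Theorem~\ref{thm:classification} with the standard variational-inequality characterization of equilibria. First I will recall that a strategy profile $x\in K=K_1\times\dots\times K_n$ is an equilibrium exactly when it solves the variational inequality
\[
\langle F(x),y-x\rangle\ge 0\qquad\text{for all } y\in K .
\]
Here $F=(\nabla_{x_i}\pi_i)_i$ is the marginal cost operator. The equivalence holds because $f\in\D(\Rp)$ makes each player's cost $\pi_i(x_i,x_{-i})=\sum_e x_{i,e}f_e(\ell_e)$ convex in her own strategy $x_i$; this follows from $\Theta_f\ge 0$. The first-order optimality conditions of the individual convex problems then concatenate into the VI above.

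\textbf{Existence.} $K$ is nonempty, convex and compact, and $F$ is continuous since every $f_e$ is $C^2$. The Hartman--Stampacchia theorem (equivalently, Kakutani applied to the best-response correspondences, using convexity of $\pi_i$ in $x_i$) therefore gives a solution, i.e.\ an equilibrium.

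\textbf{Uniqueness.} By part~\ref{enum:char2} of Theorem~\ref{thm:classification}, the class $Q_n(\Rp)\cap\D_{\rm mc-si}(\Rp)$ is $n$-player strictly VI-monotone. So for the given game,
\[
\langle F(x)-F(y),x-y\rangle>0\qquad\text{for all } x\ne y \text{ in } K .
\]
Suppose $x\ne y$ are both solutions. Testing the VI for $x$ with $y$, and the VI for $y$ with $x$, gives
\[
\langle F(x),y-x\rangle\ge 0\quad\text{and}\quad\langle F(y),x-y\rangle\ge 0 .
\]
Adding the two inequalities yields $\langle F(x)-F(y),x-y\rangle\le 0$, which contradicts strict monotonicity. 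Hence $x=y$.

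\textbf{Main obstacle.} The delicate point is checking that the paper's notion of "strictly VI-monotone" (defined in Section~\ref{sec:vi-operator}) really delivers strict inequality for all distinct profiles $x\ne y$ in $K$. It might instead give it only for distinct aggregate load vectors, or only on the relevant domain. If the definition is stated only in terms of loads, then the argument above shows only that any two equilibria $x,y$ have identical loads, both per resource and per player-resource. In that case I would finish as follows. Write $F(x)-F(y)$ resource-wise. Once the per-player, per-resource loads coincide, the marginal costs $f_e(\ell_e)+x_{i,e}f_e'(\ell_e)$ are identical, so the difference vanishes. Strict increase of $s\mapsto f(s)+sf'(s)$ on nondegenerate intervals, i.e.\ the $\D_{\rm mc-si}$ condition, rules out flat pieces where distinct player loads could give equal marginal costs. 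Together these force $x_{i,e}=y_{i,e}$ for all $i,e$, and hence $x=y$.
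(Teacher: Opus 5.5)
Your proof is correct and takes the same route as the paper: existence (the paper cites Kakutani, and Hartman--Stampacchia works equally well), then strict monotonicity from Theorem~\ref{thm:classification}, item~\ref{enum:char2}, combined with the two VI inequalities of Lemma~\ref{equilibriumcondition}, which rules out two distinct equilibria. Your ``main obstacle'' is moot, because Definition~\ref{def:universal-strict-vi-monotonicity} already requires strict inequality for all distinct profiles $x\neq y$ in $K$, not just distinct loads. You can drop the fallback paragraph, which is confused anyway: equal per-player-resource loads already mean $x=y$, and the $\D_{\rm mc-si}$ condition plays no role there.
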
  
This last corollary generalizes a classical uniqueness result
for monomial costs of degree at most three due to Altman et
al.~\cite{Altman02competitiverouting}, which are based on Rosen's strict
concave diagonality condition~\cite{Rosen65}.
For an overview of the structure of the set $Q_n(\Rp)$,
we refer to Appendix~\ref{sec:cost-classes}.

The second main contribution shows that the curvature condition of $Q_n(\Rp)$ is not merely a
sufficient condition for global convergence of
the Euclidean-regularized FW dynamics but, under a
natural closure property, also \emph{necessary} for universal global and local stability, respectively.  For a cost function $f$, let
\[
\mathrm{Aff}_+(f):=\{\alpha f+\beta:\alpha>0,\ \beta\in\R\}
\]
be its positive affine hull.  We call a class $\C\subseteq \D(\Rp)$ \emph{PAT-closed},  if
it is closed up to \emph{positive affine transformations}, that is, 
$\mathrm{Aff}_+(f)\subseteq\C$ for every $f\in\C$.
We call a class $\mathcal C$ \emph{universally locally (globally) FW-stable} if, for
every $\eta>0$ and every $n$-player atomic splittable congestion game with
resource costs in $\mathcal C$, every regularized equilibrium of the
Euclidean-regularized FW dynamics is locally (globally) Lyapunov stable.

\begin{theorem}[FW-stability characterization]\label{thm:fw-characterization}
Let $n\ge2$ and let $\mathcal C\subseteq\mathcal D(\mathbb R_+)$ be
PAT-closed. Then the following are equivalent:
\begin{enumerate}
\item $\mathcal C$ is universally locally FW-stable.\label{enum:uls}
\item $\mathcal C$ is universally globally FW-stable.\label{enum:ugs}
\item $\mathcal C\subseteq Q_n(\mathbb R_+)$.\label{enum:mon}
\end{enumerate}
Consequently, $Q_n(\mathbb R_+)$ is the unique largest PAT-closed class
with either universal FW-stability property.
\end{theorem}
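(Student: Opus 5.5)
We prove the cycle of implications (3)$\Rightarrow$(2)$\Rightarrow$(1)$\Rightarrow$(3); the middle implication is trivial.

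\emph{(3)$\Rightarrow$(2).} If $\mathcal C\subseteq Q_n(\mathbb R_+)$, Theorem~\ref{thm:classification}(\ref{enum:char1}) makes the marginal cost operator $F$ of every $n$-player game monotone. For every $\eta>0$, Proposition~\ref{prop:RFW} then gives a unique regularized equilibrium that is globally asymptotically stable, in particular globally Lyapunov stable. PAT-closedness is not needed here.

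\emph{(2)$\Rightarrow$(1).} Global Lyapunov stability implies local stability.

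\emph{(1)$\Rightarrow$(3), by contraposition.} Suppose some $f\in\mathcal C$ has $\Delta_f(x_0)<0$ at some $x_0\ge0$. Since $f$ is $C^2$, we may take $x_0>0$.

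\textbf{Step 1: reuse the non-monotone instance.} The necessity part of Theorem~\ref{thm:classification} builds a single-resource game with cost $f$, $n$ players and interval strategy spaces. At the symmetric load $x_i=x_0/n$, the symmetric part of the Jacobian $J=DF(x^*)$ has a negative eigenvalue. I would compute this explicitly. With load $s=\sum_i x_i$, we have $F_i=f(s)+x_if'(s)$ and
\[
J_{ij}=f'(s)+\delta_{ij}f'(s)+x_if''(s).
\]
At symmetric $x$ this equals $f'I+(f'+\tfrac{s}{n}f'')\mathbf 1\mathbf 1^\top$ up to a rank-one nonsymmetric part. It is not symmetric in general: the term $x_if''$ sits in row $i$, so $J=f'I+\mathbf 1\mathbf 1^\top f'+x\mathbf 1^\top f''$.

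For directions $v\perp\mathbf 1$ we get $Jv=f'v$, and on $\mathbf 1$ the eigenvalue is $(n+1)f'+sf''$. Hence every eigenvalue of $J$ has positive real part, even though its symmetric part is indefinite. So in one dimension the instability cannot come from the spectrum of $J$ alone. It has to come from the coupling of $J$ with the projection in the regularized dynamics.

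\textbf{Step 2: choose the regularized dynamics and the box.} The Euclidean-regularized FW dynamics is $\dot x=\Pi_K(x^{\rm ref}-F(x)/\eta)-x$, or a similar form. At an interior regularized equilibrium its linearization is $-(I+J/\eta)$. This matrix has eigenvalues with negative real part, so it is stable. Instability must therefore be manufactured through a richer strategy space.

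\textbf{Step 3: build the unstable instance.} I would give each player a two-dimensional strategy space: demand split over two resources, or a product with a dummy coordinate. The goal is a projected Jacobian $-(I+PJP/\eta)$, with $P$ projecting onto a face or tangent subspace, that has an eigenvalue with positive real part. The non-normal part $x\mathbf 1^\top f''$ restricted to a subspace can rotate eigenvalues to negative real part once the symmetric part is indefinite. Concretely, I would seek a subspace $V$ (obtained by constraining players to affine sets $K_i$) such that $P_VJP_V$ has a eigenvalue with negative real part, and then pick $\eta$ large.

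PAT-closedness lets us replace $f$ by $\alpha f+\beta$. This rescales $J$ by $\alpha$ and shifts the equilibrium location, so we can place the regularized equilibrium exactly at the load $x_0$ in the relative interior and tune the ratio $\alpha/\eta$ freely. The existence of such a $V$ is linear algebra: a real matrix whose symmetric part is not positive semidefinite admits a subspace (even two-dimensional) on whose compression the symmetric part is negative in some direction. A 1-dimensional $V=\spanop\{v\}$ with $v^\top Jv<0$ already works, since the compression is then the scalar $v^\top Jv$.

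Affine strategy sets force each player's own coordinate to move along $v_i$. However, $v$ couples players, and individual strategy sets cannot impose joint constraints. This is the main obstacle. I would first try a multi-resource network in which the common shared load is routed in a way that realizes the indefinite direction via separate resources per player pair. Then I would verify that the linearization has an eigenvalue with positive real part, giving exponential instability and hence failure of Lyapunov stability.
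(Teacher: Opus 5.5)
\textbf{The necessity direction (1)$\Rightarrow$(3) is not proved.} Your Steps 1--3 stop at the obstacle you name yourself. On a single resource, any product of intervals gives a restricted Jacobian that is a principal submatrix of $f'I+(f'\mathbf 1+f''x)\mathbf 1^\top$, and its spectrum is nonnegative. So the unstable instance must couple several resources. Your suggested ``demand split over two resources'' is of simplex type, and there Theorem~\ref{thm:parallel} shows that interior rest points are locally exponentially stable (under $c'>0$).

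The missing idea is the paper's cyclic $n$-resource instance. Make player $i$ dominant on resource $i$: $x^*_{i,i}=s(1-(n-1)\delta)$ and $x^*_{i,e}=s\delta$ for $e\ne i$, so every load equals $s$. Let $K_i$ be the segment through $x_i^*$ in direction $q_i$, with $q_{i,i}=-\frac{n-1}{2}$ and $q_{i,e}=1$. The joint perturbation $\theta=\mathbf 1$ then induces on each resource $e$ the vector $(q_{i,e})_i$. This is $\frac{n-1}{2}$ times the extremal direction of Lemma~\ref{ineq:main} with the big player at $e$, so purely individual constraints realize the coupled direction. By symmetry, $\mathbf 1$ is an eigenvector of the coordinate matrix of $DF(x^*)|_T$, with eigenvalue $\alpha M_\delta\to\alpha\frac{n-1}{n+3}\Delta_f(s)<0$ as $\delta\downarrow0$.

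The shift $\beta$ makes $x^*$ a rest point; this is where PAT-closedness enters, as you anticipated. But $\eta$ must be \emph{small}, $0<\eta<-\alpha M_\delta$, not large: the eigenvalue $-1-\mu/\eta$ has positive real part only if $\Repart\mu<-\eta$. Your Step 1 claim is also false. At the symmetric load $z=\frac{s}{n}\mathbf 1$ the block form equals $a\norm{u}^2+(a+\frac{bs}{n})(\mathbf 1^\top u)^2\ge0$ when $f''(s)>0$. The violation lives near the concentrated load $z=se_1$, which is why the paper's $x^*$ is nearly concentrated on each resource; $\delta>0$ only keeps it interior.

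\textbf{Your step (2)$\Rightarrow$(1) is not trivial.} Universal global FW-stability only says that the regularized equilibrium \emph{set} is globally asymptotically stable. Without monotonicity this set need not be a singleton. Lyapunov stability of a set of rest points does not imply Lyapunov stability of each point, since trajectories can drift along the set. So your cycle does not yield (2)$\Rightarrow$(3).

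The paper proves (2)$\Rightarrow$(3) directly with the same instance. Let $A=DF(x^*)|_T$ and choose $\eta\in(0,-\mu)$ outside the finite set $\{-\lambda:\lambda\in\operatorname{spec}(A)\cap(-\infty,0)\}$. Then $DG(x^*)|_T=-I-A/\eta$ is nonsingular, so $x^*$ is an isolated rest point. Since $x^*$ is also unstable, nearby trajectories leave a ball in which $x^*$ is the only rest point. They therefore reach a fixed positive distance from the whole rest-point set, which contradicts its Lyapunov stability. Your (3)$\Rightarrow$(2) agrees with the paper.
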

The proof works as follows.  
For \ref{enum:mon}.$\Rightarrow$\ref{enum:ugs}., we can use Proposition~\ref{prop:RFW} as already mentioned. 
As for monotone games with regularization, the regularized equilibrium is unique, we then also
obtain \ref{enum:mon}.$\Rightarrow$\ref{enum:uls}.

\iffalse
The inclusion $Q_n(\Rp)\subseteq Q_n^{\mathrm{FW-glob}}(\Rp)$ 
 follows by standard convergence results of regularized FW for monotone games, see for instance
Hadikhanloo et al.~\cite{Hadikhanloo2021}(Thm. 4.2.) as already mentioned. 
Note that in Appendix~\ref{sec:closure}, we prove that $Q_n(\Rp)$
is indeed closed under positive affine transformations.
As  global asymptotic stability implies local stability, we get  $Q_n(\Rp)\subseteq Q_n^{\mathrm{FW-loc}}(\Rp)$.
$Q_n(\Rp)\supseteq Q_n^{\mathrm{FW-loc}}(\Rp)$
\fi
For \ref{enum:uls}.$\Rightarrow$\eqref{enum:mon}., we show that whenever $f\in \C$ with $f\notin Q_n(\Rp)$, there is a game with interval-based strategy spaces and costs from the set
$\mathrm{Aff}_+(f)$ having an interior regularized equilibrium (hence a rest point of the dynamic)
for which the dynamic is linearly unstable
implying local instability. 

 For \ref{enum:ugs}.$\Rightarrow$\eqref{enum:mon}.,
we then use the same construction to further show that the locally instable equilibrium can be chosen \emph{isolated}
contradicting global asymptotic stability of the equilibrium set.

Our main construction is illustrated
for a three player game using degree-six polynomial cost functions. 
\begin{corollary}[Degree-$6$ polynomial instability]\label{cor:degree-six-counterexample}
There is an atomic splittable congestion game with three players, polynomial
cost functions with nonnegative coefficients and degree bounded by $6$, and an
interior regularized equilibrium that is locally unstable under the Euclidean-regularized
Frank--Wolfe dynamics.
\end{corollary}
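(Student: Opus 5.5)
The plan is to obtain the corollary as a concrete instance of the construction behind the implication \ref{enum:uls}.$\Rightarrow$\ref{enum:mon}. of Theorem~\ref{thm:fw-characterization}, applied to $f(x)=x^6$ with $n=3$.

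\textbf{Step 1: $f\notin Q_3(\Rp)$.} For $n=3$ we have $\gamma_3=2+4/2=4$. Then
\[
\Delta_f(x)=4\cdot 6x^5-x\cdot 30x^4=-6x^5<0 \quad\text{for all } x>0 .
\]
Moreover $f\in\D(\Rp)$, since $f'(x)=6x^5\ge 0$ and $\Theta_f(x)=12x^5+30x^5\ge0$. So $f$ violates the curvature condition at every positive load, and the instability construction can be applied at any load level $s>0$ we like.

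\textbf{Step 2: instantiate the construction.} I would use a single resource and three players with interval strategy spaces $x_i\in[a_i,b_i]$. Player costs are $x_i\,c(s)$ with $c=\alpha f+\beta\in\mathrm{Aff}_+(f)$ and $s=x_1+x_2+x_3$. The marginal cost operator is
\[
F_i(x)=c(s)+x_i c'(s),
\]
with Jacobian
\[
J=\alpha\bigl(f'(s)(I+\ones\ones^\top)+f''(s)\,x\ones^\top\bigr).
\]

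The Euclidean-regularized FW map is a projection onto the box $K$. At an interior rest point $x^*$, the linearization of $\dot x=\Pi_K(\,\cdot\,-F(x)/\eta)-x$ has the form $-(I+J/\eta)$, up to the precise regularizer convention. Hence linear instability amounts to $J$ having an eigenvalue with real part below $-\eta$.

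At a symmetric profile $J$ is positive definite: its eigenvalues are $f'$ (twice) and $f'+3(f'+tf'')$. So the profile must be asymmetric, for instance one large player and two small ones. This is exactly where $\Delta_f<0$ should make the rank-two part $f''\,x\ones^\top$ dominate. I would take the specific eigenvector/profile produced in the proof of Theorem~\ref{thm:fw-characterization}. The two scalings then finish the job: $\alpha$ rescales $J$, and $\eta>0$ can be chosen small relative to $\alpha$, so any eigenvalue with negative real part becomes one with real part below $-\eta$.

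\textbf{Step 3: realize the rest point with admissible coefficients.} The interior rest point $x^*$ is made a regularized equilibrium by solving the fixed-point equations for the reference point, or equivalently the interval bounds $a_i<x_i^*<b_i$. This step is what consumes the freedom in $\beta$ in the general proof.

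Here, to keep nonnegative coefficients, I would fix $\beta\ge0$ (e.g. $\beta=0$) and $\alpha>0$, and place $x^*$ inside the intervals by adjusting the endpoints and the regularization reference point instead. The resulting cost $\alpha x^6+\beta$ is a polynomial of degree $6$ with nonnegative coefficients.

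\textbf{Main obstacle.} I expect the hard part to be verifying, for the explicit asymmetric profile, that the nonsymmetric matrix $J$ truly has an eigenvalue with negative real part. Failure of monotonicity only gives an indefinite symmetric part. I would compute the characteristic polynomial on the two-dimensional invariant subspace $\spanop\{\ones,x\}$; on its complement $J$ acts as $\alpha f'(s)I$, which is positive. I would then check the sign of the determinant or trace there for a concrete choice such as $x^*=(t,\varepsilon,\varepsilon)$ with $\varepsilon$ small.

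If this particular profile fails, the fallback is to use several resources, as the general construction presumably does. The verification of the rest point's interiority and the admissibility of the coefficients would be unchanged.
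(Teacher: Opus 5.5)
There is a genuine gap. The single-resource interval game you take as your main route cannot produce the negative eigenvalue, and it cannot produce an interior rest point with nonnegative coefficients. The two ingredients you expect to carry over unchanged to a multi-resource fallback (the rest point and the admissibility of the coefficients) are exactly the ones that fail.

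\textbf{The eigenvalue.} With one resource and nondegenerate intervals, $T=\R^3$, so the relevant matrix is the full $J=aI+(a\ones+bx)\ones^\top$ with $a=c'(s)$ and $b=c''(s)$. This is $aI$ plus a rank-one matrix. Hence for \emph{every} profile $x$, symmetric or not, its spectrum is $a$ (twice, with eigenspace $\ones^\perp$) together with $a+\ones^\top(a\ones+bx)=4a+bs=2a+\Theta_c(s)\ge0$. Your planned $2\times2$ computation on $\spanop\{\ones,x\}$ confirms this: it gives trace $5a+bs$ and determinant $a\bigl(4a+bs\bigr)$. So $-I-J/\eta$ is always Hurwitz in this setting. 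Violating $\Delta_f\ge0$ only makes the symmetric part of $J$ indefinite. That suffices for non-monotonicity (Theorem~\ref{thm:classification}) but not for instability, which is why the paper abandons the one-resource construction for Theorem~\ref{thm:fw-characterization}.

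\textbf{The rest point.} At an interior point of $K_i=[a_i,b_i]\subset\R_+$, the condition $x_i^*=\beta_i^\eta(F_i(x^*))$ reads $F_i(x^*)+\eta x_i^*=0$, independently of the endpoints. With $c=\alpha t^6+\beta$, $\alpha>0$, $\beta\ge0$ and $x_i^*>0$, the left side is strictly positive, so $\beta_i^\eta$ returns the left endpoint $a_i$. A ``regularization reference point'' is not available either: the dynamics \eqref{eq:fw-reg} fix the regularizer $\frac{\eta}{2}\norm{y_i}^2$ centred at the origin, and changing it changes the statement.

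\textbf{What is missing} is the paper's three-resource construction. Take $x^*$ with entries $49/50$ on the diagonal and $1/100$ off it, so all loads equal $1$. Let $K_i$ be a short segment through $x_i^*$ along $q_i=\ones-2e_i$. Because $q_i$ has mixed signs, the rest-point condition becomes $\langle F_i(x^*)+\eta x_i^*,q_i\rangle=c(1)-(c'(1)+\eta)(1-4\delta)=0$ with $\delta=1/100$. For $c=\kappa+t^6/6$ and $\eta=3/100$, this is solved by the \emph{positive} constant $\kappa=3083/3750$. So the shift $\beta$ of the general construction happens to be nonnegative here, which is what permits nonnegative coefficients. On $T=T_1\times T_2\times T_3$ with $T_i=\spanop\{q_i\}$, the projected Jacobian has constant row sums $11/3-2\cdot 59/30=-4/15<-\eta$. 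Thus $\ones$ is an eigenvector with eigenvalue $-4/15$, and the linearization has the eigenvalue $-1+\frac{4/15}{3/100}=71/9>0$.
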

Note that the degree bound is tight for this instability result.  When $n=3$, the
curvature condition is $x f''(x)\le4f'(x)$, and every polynomial with
nonnegative coefficients and degree at most five satisfies this inequality term
by term.  Thus degree six is the first degree at which the critical curvature
threshold can be violated.

\iffalse
Let $Q_n^{\mathrm{FW-loc}}(\Rp)\subseteq \D(\Rp)$ 
denote the largest (inclusion-wise) PAT-closed class of cost functions such that for every $n$-player atomic splittable congestion game with resource costs in $Q_n^{\mathrm{FW-loc}}(\Rp)$ and any regularization parameter $\eta>0$, every 
regularized equilibrium of the Euclidian-regularized FW dynamics is locally stable.
Analogously, let $Q_n^{\mathrm{FW-glob}}(\Rp)\subseteq \D(\Rp)$  
denote the largest  PAT-closed class  of cost functions so that we obtain global
asymptotical stability for the same set of games.
\fi

Our third result shows that the negative \emph{local} instability relies on the freedom to
choose general convex strategy spaces.  For games on simplices (aka network games on parallel links),
regularized Frank--Wolfe dynamics are shown to be locally stable under weaker conditions than
VI-monotonicity.

\begin{theorem}[Local stability on simplices]\label{thm:parallel}
Let $\C\subset \D(\R_+)$ be a set of functions such that every $c\in\C$ satisfies
$c'(x)>0$ for all $x\in \R_+$.
Let $\G$ be an atomic splittable congestion game  with $n$ players
and cost functions in $\C$, where each player's strategy space is of simplex type,
\[
 K_i=\left\{x_i\in \R_+^E: \sum_{e\in E} x_{i,e}=d_i\right\},
 \qquad d_i>0,
 \qquad i\in N.
\]
Then every interior rest point $x^*\in \relint(K)$ of the Euclidean-regularized
FW dynamics~\eqref{eq:fw-reg} is locally exponentially stable.
\end{theorem}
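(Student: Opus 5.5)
The plan is a linearization argument at the interior rest point, followed by a direct spectral analysis of the Jacobian that uses the simplex structure of the $K_i$ instead of VI-monotonicity.

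\emph{Step 1: linearize.} I take the Euclidean-regularized dynamics~\eqref{eq:fw-reg} in the form $\dot x=\Pi_K(x^{\mathrm{ref}}-\eta F(x))-x$, i.e.\ the regularized best response is a Euclidean projection onto $K$. If the paper's normalization differs, only the constants below change. At an interior rest point $x^*\in\relint(K)$, the projected point $\Pi_K(\cdot)$ also lies in $\relint(K)$, so in a neighbourhood $\Pi_K$ acts as the affine projection onto $\operatorname{aff}(K)$. Its linear part is the orthogonal projector $P$ onto
\[
T=\Big\{v\in\R^{N\times E}:\textstyle\sum_{e}v_{i,e}=0\ \forall i\Big\}.
\]
Hence the vector field is $C^1$ near $x^*$, and its Jacobian restricted to the invariant affine space $x^*+T$ is
\[
J=-\big(I+\eta\,P\,DF(x^*)\big)\big|_T .
\]
By the linearization (Hartman--Grobman / Lyapunov indirect) theorem, it suffices to show that every eigenvalue $\lambda$ of $P\,DF(x^*)|_T$ satisfies $\Repart\lambda>-1/\eta$. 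I will aim for the stronger statement $\Repart\lambda>0$, which gives the bound for every $\eta>0$.

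\emph{Step 2: structure of $DF$.} The marginal cost is $F_{i,e}(x)=c_e(\ell_e)+x_{i,e}c_e'(\ell_e)$. Write $a_e=c_e'(\ell_e^*)>0$, $b_e=c_e''(\ell_e^*)$, and let $x_e^*=(x^*_{i,e})_i$. Then $DF(x^*)$ is block diagonal over resources, with blocks
\[
A_e=a_e(I+\ones\ones^\top)+b_e\,x_e^*\ones^\top .
\]
Consider an eigenpair $P\,DF\,v=\lambda v$ with $v\in T\setminus\{0\}$, and set $s_e=\ones^\top v_e$. Then there are multipliers $\mu_i$ such that
\[
(\lambda-a_e)v_{i,e}=(a_e+b_e x^*_{i,e})s_e-\mu_i .
\]
Because the data are real, I may first treat a real $\lambda\le 0$ and then a complex $\lambda$. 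In both cases $\lambda-a_e\neq0$, since $a_e>0$. I will therefore solve for $v_{i,e}$ and substitute into the two families of constraints $\sum_e v_{i,e}=0$ and $s_e=\sum_i v_{i,e}$. This yields a finite linear system in $(s_e)_e$ and $(\mu_i)_i$, whose solvability forces a secular equation for $\lambda$.

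\emph{Step 3: exclude $\Repart\lambda\le0$ (main obstacle).} This is the core of the argument. The rank-one perturbation $b_e x^*_e\ones^\top$ may be negative ($b_e<0$) or large. Large $b_e>0$ is exactly what breaks monotonicity in Theorem~\ref{thm:classification}, so a plain symmetric-part argument will not work.

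First I would exploit two facts. The simplex constraints couple a player only through a single scalar $\mu_i$. The per-resource coupling enters only through $s_e$. I would test the eigen-equation against $\bar v$ and pair with $s_e$. This should produce an identity of the form
\[
\sum_e(\lambda-a_e)\norm{v_e}^2=\sum_e\big(a_e|s_e|^2+b_e\langle x_e^*,\bar v_e\rangle s_e\big),
\]
in which the $\mu_i$ terms cancel because $v\in T$.

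Next I would use $\Theta_{c_e}(\ell_e^*)=2a_e+b_e\ell_e^*\ge0$, which holds since $\C\subset\D(\R_+)$, together with $x^*_{i,e}\le\ell_e^*$, to control the indefinite cross term. If this direct estimate is insufficient, my fallback is a similarity transformation. I would rescale each resource block by a diagonal matrix in the player coordinates, built from $x^*_e$, so that the rank-one term becomes symmetric (of the form $u u^\top$ or a rank-two symmetric term). I would then show that the transformed operator, compressed to $T$, has positive definite symmetric part. The strict inequality $a_e>0$ is what should make the spectrum lie strictly in the right half-plane, rather than only in the closed half-plane. That strictness is what upgrades local stability to local exponential stability.
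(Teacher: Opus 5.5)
Your linearization in Step~1 is fine, and the equations in Step~2 are the right ones. The gap is that the proposal never establishes $\Repart\lambda>0$, and the argument you call the core in Step~3 cannot succeed.

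\textbf{Why the energy argument fails.} Pairing the eigen-equation with $\bar v$ gives just the Rayleigh quotient $\lambda\norm{v}^2=v^*DF(x^*)v$; the $\mu_i$ drop out because $v\in T$. So $\Repart\lambda\,\norm{v}^2$ is the Hermitian form of $DF(x^*)$ at $v$. Bounding the cross term $b_e\langle x_e^*,\bar v_e\rangle s_e$ using only $\Theta_{c_e}(\ell_e^*)\ge0$ and $x^*_{i,e}\le\ell_e^*$ is exactly the estimate of Lemma~\ref{ineq:main}. For $b_e\ell_e^*>\gamma_n a_e$ that estimate gives a negative bound.

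This is not slack in the estimate. The symmetric part of $P_TDF(x^*)P_T$ really is indefinite on $T$ at suitable interior points of a product of simplices. Take two resources: on the first, a cost violating $\Delta\ge0$, with player~$1$ carrying almost all of its load; on the second, an affine cost of small slope. Take $v_{i,1}=-v_{i,2}=u_i$ with $u=(-1,\tfrac{2}{n-1},\dots,\tfrac{2}{n-1})$, and compare Remark~\ref{rem:parallel}. So no argument that bounds the form over all of $T$ can work; you must use that $v$ is an eigenvector.

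\textbf{Why the fallback does not help.} The scaling $\operatorname{diag}(\sqrt{x^*_{i,e}})$ symmetrizes $x_e^*\ones^\top$ but destroys the symmetry of $\ones\ones^\top$. Moreover, a resource-wise diagonal scaling does not commute with $P_T$, which averages over resources for each player. So the ``transformed operator compressed to $T$'' is not well defined, let alone shown to have positive definite symmetric part.

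\textbf{The missing step.} Carry Step~2 through by summing your eigen-equation over players. Set $M:=\sum_i\mu_i$. This gives the decoupled scalar relations $(\lambda-D_e)s_e=-M$, where
\[
D_e=(n+1)a_e+b_e\ell_e^*=(n-1)a_e+\Theta_{c_e}(\ell_e^*)>0 .
\]
This positivity is the only place $\Theta\ge0$ enters.
\begin{enumerate}
\item If $M\neq0$, then $s_e=M/(D_e-\lambda)$. The aggregated tangency condition $\sum_e s_e=\sum_i\sum_e v_{i,e}=0$ yields $\sum_e(D_e-\lambda)^{-1}=0$. Its imaginary part is $\operatorname{Im}\lambda\sum_e|D_e-\lambda|^{-2}$, which forces $\lambda$ to be real. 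For real $\lambda\le0$ every summand is positive, so $\lambda>0$.
\item If $M=0$, either $\lambda=D_e>0$ for some $e$, or all $s_e=0$. In the latter case $(\lambda-a_e)v_{i,e}=-\mu_i$. Then either $\lambda=a_e>0$, or the per-player constraint gives $\sum_e(a_e-\lambda)^{-1}=0$ for some $i$ with $\mu_i\neq0$ (if all $\mu_i=0$ then $v=0$). This is handled the same way.
\end{enumerate}
This is the paper's proof, and it shows all eigenvalues are real and strictly positive.

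Incidentally, strictness is not what yields exponential stability. $\Repart\lambda\ge0$ already places the spectrum of the linearization in $\{\Repart\le-1\}$.
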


\subsection{Significance of Our Results}
Atomic splittable congestion games are among the core topics in the game theory, operations
research, computer science, and economics literature with applications in 
scheduling games~\cite{Korilis1997,KorilisLO95}, routing games~\cite{Haurie85,RS11a,Orda93}, facility location games~\cite{krogmann2024}, network design~\cite{FOTAKIS2014} and logistics networks~\cite{CCS06}.
In all of the above applications, myopic distributed play of the players should guide the system to a stable state. Because the number of players and their types (expressed by player-specific strategy spaces) are only known to
the players and not available to the system designer, it is very natural to study the above stability question with
respect to the used cost functions. In fact, in most of the above-mentioned applications, the cost functions
are under control of the system designer since they represent the technology associated with the resources,
e.g., queuing discipline at routers, latency function in transportation networks, etc.
Therefore, our characterization results precisely explain under which cost structure 
the  system is stable (under the natural (regularized) best-response dynamics), \emph{regardless} of the individual strategy spaces. 
\section{Preliminaries}
In the following we consider $\R^d$ with its standard inner
product denoted by $\langle \cdot, \cdot\rangle$ with induced norm $\norm{\cdot}$.
We denote by $\R^d_+$ the (component-wise) non-negative real vectors in $\R^d$.
For a finite set \(E\), we write \(\mathbb R^E\) for the Euclidean space
of vectors indexed by \(E\). Thus a vector \(x_i\in\mathbb R_+^E\) is
written as \(x_i=(x_{i,e})_{e\in E}\).
\subsection{Atomic Splittable Congestion Games}
\label{sec:ascg}
An atomic splittable congestion game can be represented by a tuple 
$\G=(N,E,(K_i)_{i\in N}, (c_e)_{e\in E})$, where  $E=\{e_1,\dots,e_m\}$ is a finite set of resources
and $N=\{1,\dots,n\}$ with $n\in \N$ is the finite player set. 
In a quite general formulation, each player $i\in N$ is associated with a player-specific convex and compact
set $K_i\subset\R_+^E$ representing the set of feasible non-negative strategy vectors.
Note that $K_i$ may for instance contain the set of edge load vectors representing
$s_i$-$t_i$ flows of demand $d_i>0$ in a given directed or undirected graph, where $s_i$ and $t_i$
are the source and sink nodes, respectively. More generally, $K_i$ may represent the
resource load vectors when distributing the demand over sets of a player-specific set system over the resources (general congestion game formulation).

The overall strategy space is given by $K := \bigtimes _{i \in N} K_i$ and ${x}=({x}_i)_{i\in N}$ 
with $x_i=(x_{i,e})_{e\in E}\in K_i$ is called strategy profile.
For a strategy profile $x$ and fixed resource $e\in E$ we denote by
$x_e:=(x_{i,e})_{i\in N}$ the vector of player-specific usages of $e$.
The \emph{load} of a strategy profile $x\in K$ induced on resource $e\in E$ is given as
$ \load{x}_{e}:=\sum_{i\in N} x_{i,e} \text{ for all }e\in E.$
Resources are associated with nondecreasing and twice continuously differentiable load-dependent cost functions $c=(c_e)_{e\in E}$ with
$c_e:\R_+\rightarrow \R,e\in E$. We assume $c_e\in \D(\R_+)$ for all $e\in E$, where
\[
\D(\R_+):=\{f\in C^2(\R_+)\mid f'\geq0,\; 2f'+x f''\geq 0\}.
\]
Equivalently, $f\in\D(\R_+)$ if $f$ is nondecreasing and $x\mapsto x f(x)$ is convex.  This is exactly the convexity condition needed for each player's private cost in her own resource usage: if $s$ is the total load and $0\le r\le s$ is the player's own load on a resource, then
\[
 2c'_e(s)+r c''_e(s)\ge0,
\]
because either $c''_e(s)\ge 0$ or $2c'_e(s)+s c''_e(s)\ge 0$.
The private cost function of player $i\in N$ is given by
$\pi_i: K \rightarrow\R_+, x\mapsto \langle c(x),x_i\rangle=\sum_{e\in E}c_e(\load{x}_{e})x_{i,e}.$
We
write $K_{-i} = \bigtimes_{j \neq i} K_j$ and we write $x = (x_i,x_{-i})$ for each $i \in N$,
meaning that $x_i \in K_i$ and $x_{-i} \in K_{-i}$. 
\begin{definition}
A strategy profile
$x\in K$ is a \emph{Nash equilibrium} for a game $\G$, if $\pi_i(x) \leq
\pi_i(y_i, x_{-i})$ for all $i \in N$ and $y_i \in K_i$. 
\end{definition}
Using compactness and convexity of the strategy spaces together with convexity of each player's cost in her own strategy, Kakutani's fixed point theorem~\cite{Kakutani41} implies the existence of at least one Nash equilibrium.

\subsection{Variational Inequalities}
\label{sec:vi-operator}
We define the \emph{marginal cost operator}
$F:\R_+^{N\times E}\to\R_+^{N\times E}$ by
\[
 F_{i,e}(x)
 :=
 c_e(\load{x}_{e})+x_{i,e}c'_e(\load{x}_{e}),
 \qquad i\in N,\ e\in E.
\]
Equivalently, for each resource $e\in E$ we define the resource block
\[
 F_e:\R_+^N\to\R_+^N,
 \qquad
 F_e(z)=\bigl(c_e(s)+z_i c'_e(s)\bigr)_{i\in N},
 \qquad
 s=\ones^\top z,
\]
so that $F_{i,e}(x)=(F_e(x_e))_i$. Here, $\ones$ denotes the all ones
vector.
We obtain the following \emph{variational inequality} that provides sufficient and necessary equilibrium conditions for a Nash equilibrium:
\begin{lemma}[cf. Harks~\cite{Harks:stack2011}(Lemma 1)]
	\label{equilibriumcondition}
	A strategy profile $x\in K$ is a Nash equilibrium if and only if  $x\in K$ solves the following variational inequality termed $VI(K,F)$: 
	\[ \langle F(x),x-y\rangle=\sum_{i\in N}\sum_{e\in E}F_{i,e}(x_e)(x_{i,e}-y_{i,e})\leq 0 \text{ for all }y\in K.\]
\end{lemma}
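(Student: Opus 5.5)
We prove the two directions of Lemma~\ref{equilibriumcondition} separately. Both rest on the fact that each player's private cost is convex in her own strategy, and that its gradient with respect to $x_i$ is exactly the block $F_i(x)=(F_{i,e}(x_e))_{e\in E}$. Concretely, fix $i\in N$ and $x_{-i}\in K_{-i}$, and write
\[
\phi_i(y_i):=\pi_i(y_i,x_{-i})=\sum_{e\in E} c_e\bigl(y_{i,e}+\load{x}_{-i,e}\bigr)\,y_{i,e},
\qquad \load{x}_{-i,e}:=\sum_{j\neq i}x_{j,e}.
\]
The function $\phi_i$ is separable over resources, and each summand $g_e(r):=c_e(r+\load{x}_{-i,e})\,r$ is $C^2$ on $\R_+$.

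The first derivative is $g_e'(r)=c_e(s)+r\,c_e'(s)$ with $s=r+\load{x}_{-i,e}$. At $r=x_{i,e}$ this equals $F_{i,e}(x_e)$, so $\nabla\phi_i(x_i)=F_i(x)$.

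The second derivative is $g_e''(r)=2c_e'(s)+r\,c_e''(s)$ with $0\le r\le s$. The preliminaries already show this is nonnegative for $c_e\in\D(\R_+)$. If $c_e''(s)\ge0$, the claim is immediate from $c_e'\ge0$. If $c_e''(s)<0$, then $r\,c_e''(s)\ge s\,c_e''(s)$, so $g_e''(r)\ge\Theta_{c_e}(s)\ge0$. Hence each $g_e$ is convex on $\R_+$, and so $\phi_i$ is a convex differentiable function on the convex set $K_i$.

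We then apply the standard first-order optimality condition for convex minimization. A point $x_i$ minimizes $\phi_i$ over $K_i$ if and only if
\[
\langle\nabla\phi_i(x_i),\,y_i-x_i\rangle\ge0 \quad\text{for all } y_i\in K_i.
\]
Necessity follows by differentiating along the segment $x_i+t(y_i-x_i)$, which stays in $K_i$ by convexity. Sufficiency follows from the gradient inequality $\phi_i(y_i)\ge\phi_i(x_i)+\langle\nabla\phi_i(x_i),y_i-x_i\rangle$. Thus $x$ is a Nash equilibrium if and only if
\[
\langle F_i(x),\,x_i-y_i\rangle\le0 \quad\text{for all } i\in N \text{ and all } y_i\in K_i.
\]

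It remains to show that this family of per-player inequalities is equivalent to $VI(K,F)$. Summing the per-player inequalities over $i$ gives $\langle F(x),x-y\rangle\le0$ for every $y\in K$. Conversely, given $i$ and $y_i\in K_i$, choose the test point $y=(y_i,x_{-i})\in K$. All blocks $j\neq i$ then contribute zero, and the VI reduces exactly to player $i$'s inequality.

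The only substantive step is the convexity of $g_e$ at own loads $r\le s$ when $c_e''<0$. This is where the assumption $\Theta_{c_e}\ge0$ is needed, and it was handled above. Everything else is routine.
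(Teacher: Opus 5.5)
Your proof is correct and is essentially the intended argument. The paper gives no proof of its own beyond citing Harks (Lemma 1) and noting in the preliminaries the convexity fact you rely on, namely $2c_e'(s)+r\,c_e''(s)\ge 0$ for $0\le r\le s$. Your route — player-wise convexity, the first-order optimality condition, and the product structure of $K$ to pass between the per-player inequalities and $VI(K,F)$ — is the standard derivation behind that citation.
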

We next state the three cost-class VI notions used in
Theorem~\ref{thm:classification}.  These definitions are universal in the sense
that they require the stated property for every atomic splittable congestion game
with the prescribed number of players and with all resource costs drawn from the
class.

\begin{definition}[Universal VI-monotonicity]
\label{def:universal-vi-monotonicity}
A class \(\C\subseteq\D(\R_+)\) is called \(n\)-player VI-monotone if every
atomic splittable congestion game with \(n\) players and all resource costs in
\(\C\) has a monotone VI operator:
\[
\ip{F(x)-F(y)}{x-y}\ge0
\qquad \forall x, y \in K.
\]
\end{definition}

\begin{definition}[Universal strict VI-monotonicity]
\label{def:universal-strict-vi-monotonicity}
A class \(\C\subseteq\D(\R_+)\) is called \(n\)-player strictly VI-monotone if
every such game has a strictly monotone VI operator:
\[
x, y \in K,\ x\ne y
\quad\Longrightarrow\quad
\ip{F(x)-F(y)}{x-y}>0.
\]
\end{definition}

\begin{definition}[Universal strong VI-monotonicity]
\label{def:universal-uniform-strong-vi-monotonicity}
A class \(\C\subseteq\D(\R_+)\) is called \(n\)-player  strongly
VI-monotone if every such game admits a constant
\(\mu>0\) such that
\[
\ip{F(x)-F(y)}{x-y}\ge \mu\norm{x-y}^2
\qquad
\forall x, y \in K.
\]
\end{definition}

\subsection{The Jacobian $DF$}
As cost functions are assumed to be $C^2$, we can speak
about the Jacobian of $F$.  Throughout the paper, $DF(x)$ denotes the
ordinary Jacobian of the global operator $F$, while $DF_e(z)$ denotes the
Jacobian of the single-resource block $F_e$ at the resource-usage vector $z$.
Since $F_{i,e}$ depends only on the vector $x_e=(x_{j,e})_{j\in N}$, the global
Jacobian decomposes resource-wise: after grouping coordinates by resources,
\[
 DF(x)=\blkdiag\bigl(DF_e(x_e)\bigr)_{e\in E},
\]
where $\blkdiag$ stands for the block-diagonal structure.
In the original player-resource ordering this is the same decomposition up to a
simultaneous permutation of rows and columns.
Let $z=(z_i)_{i\in N}\in\R_+^N$, set $s=\ones^\top z$ and abbreviate
$a_e(z):=c'_e(s)$ and $b_e(z):=c''_e(s)$.  A direct computation gives (using $\delta_{ij}$ as the Kronecker-Delta)
\[
 (DF_e(z))_{ij}
 =
 a_e(z)+\delta_{ij}a_e(z)+b_e(z)z_i,
 \qquad i,j\in N.
\]
Equivalently, in matrix form,
\[
 DF_e(z)
 =
 a_e(z)I_n+a_e(z)\ones\ones^\top+b_e(z)z\ones^\top .
\]
Thus, for every $h\in\R^{N\times E}$ with
$\bar h_e:=\sum_{j\in N}h_{j,e}$, we get
\begin{equation}\label{eq:DF-action-prelim}
 (DF(x)h)_{i,e}
 =
 c'_e(\load{x}_{e})h_{i,e}
 +\bigl(c'_e(\load{x}_{e})+x_{i,e}c''_e(\load{x}_{e})\bigr)\bar h_e.
\end{equation}
For a product strategy space $K=\bigtimes_{i\in N}K_i$, we write  $T_K:=\lin(K-K)$
for the tangent space of the affine hull of $K$.  If a game is fixed, we often
write simply $T$ instead of $T_K$.  The orthogonal projection onto this tangent
space is denoted by $P_T$.  For a differentiable operator $H$, we use the
notation
\[
 DH(x)|_T:=P_TDH(x)P_T:T\to T
\]
for the Jacobian restricted to the tangent space.  Thus, for $h\in T$,
$DH(x)|_T h=P_TDH(x)h$.

\section{Characterizing Universally Monotone Cost Functions}
\label{sec:characterization}

This section is devoted to the proof of Theorem~\ref{thm:classification}.
Throughout this section the number of players $n\ge 2$ is fixed, and we use the
notation from Section~\ref{sec:ascg}.   Recall that
\[
\gamma_n:=2+\frac{4}{n-1}=\frac{2(n+1)}{n-1}
\]
and, for $f\in\D(\Rp)$,
\[
\Delta_f(s):=\gamma_n f'(s)-s f''(s),
\qquad
\Theta_f(s):=2f'(s)+s f''(s),
\qquad s\in\Rp.
\]
The condition $f\in\D(\Rp)$ is equivalently $f'\ge 0$ and $\Theta_f\ge0$, while
$f\in Q_n(\Rp)$ is equivalently $\Delta_f(s)\ge 0$ for all $s\in\Rp$.

The proof uses the resource-block notation introduced
in Section~\ref{sec:vi-operator}.  Fix a resource $e$, suppress the index $e$, write
$f=c_e$, and let $z=(z_i)_{i\in N}\in\Rp^n$ be the vector of player usages of
this resource.  Set
\[s=\ones^\top z,\qquad
 a=f'(s),\qquad b=f''(s).
\]
The Jacobian block of the VI operator on this resource is
\begin{equation}\label{eq:resource-jacobian-block}
DF_e(z)=aI+a\ones\ones^\top+bz\ones^\top .
\end{equation}
Consequently, for every direction $u\in\R^n$,
\begin{equation}\label{eq:block-quadratic-form}
 u^\top DF_e(z)u
 =
 a\norm{u}^2+a(\ones^\top u)^2
 +b(z^\top u)(\ones^\top u).
\end{equation}
Although $DF_e(z)$ is generally not symmetric, only the quadratic form in
\eqref{eq:block-quadratic-form} is relevant for monotonicity.
The following inequality drives our main monotonicity characterization. 
\begin{lemma}[Key inequality]\label{ineq:main}
For every $z\in\Rp^n$ and $u\in\R^n$, with $s=\ones^\top z$, we have
\begin{equation}\label{eq:antagonistic-cauchy}
 (z^\top u)(\ones^\top u)
 \ge
 -\frac{n-1}{2(n+1)}\,s\left(\norm{u}^2+(\ones^\top u)^2\right)
 =
 -\frac{s}{\gamma_n}\left(\norm{u}^2+(\ones^\top u)^2\right),
\end{equation}
and
\begin{equation}\label{eq:cooperative-cauchy}
 (z^\top u)(\ones^\top u)
 \le
 \frac{s}{2}\left(\norm{u}^2+(\ones^\top u)^2\right).
\end{equation}
The factor $(n-1)/(2(n+1))=1/\gamma_n$ in the lower estimate is optimal.
Equality in \eqref{eq:antagonistic-cauchy} is attained, for example, by
$z=se_1$ and $u_1=-1, u_2=\cdots=u_n=\frac{2}{n-1}.$
\end{lemma}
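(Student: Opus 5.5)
The approach is to use linearity in $z$ to reduce to extreme points, and then reduce each bound to a two-variable quadratic form.

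\textbf{Step 1: reduction to vertices.} Fix $u$ and write $\sigma:=\ones^\top u$. For fixed $s$, the left-hand side $(z^\top u)\sigma$ is linear in $z$. The set $\{z\in\Rp^n:\ones^\top z=s\}$ is the scaled simplex with vertices $se_k$, $k=1,\dots,n$. A linear function attains its extrema over this set at the vertices. The right-hand sides of \eqref{eq:antagonistic-cauchy} and \eqref{eq:cooperative-cauchy} depend on $z$ only through $s$. The case $s=0$ is trivial, and both sides scale linearly in $s$. It therefore suffices to prove, for every index $k$,
\[
-\frac{1}{\gamma_n}\bigl(\norm{u}^2+\sigma^2\bigr)\le u_k\sigma\le \frac12\bigl(\norm{u}^2+\sigma^2\bigr).
\]
By symmetry we may take $k=1$.

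\textbf{Step 2: two-variable reduction.} Set $t:=u_1$ and $r:=\sum_{j\ge2}u_j$, so that $\sigma=t+r$. By Cauchy--Schwarz, $\norm{u}^2\ge t^2+r^2/(n-1)$, with equality exactly when $u_2=\dots=u_n$.
\begin{itemize}
\item \emph{Upper bound.} We may drop even more and use only $\norm{u}^2\ge t^2$. The claim then reads $2t(t+r)\le t^2+t^2+2tr+r^2$, which is just $r^2\ge0$.
\item \emph{Lower bound.} It suffices to show that the quadratic form
\[
q(t,r):=t(t+r)+\frac1{\gamma_n}\Bigl(t^2+\tfrac{r^2}{n-1}+(t+r)^2\Bigr)
\]
is positive semidefinite.
\end{itemize}

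\textbf{Step 3: the semidefiniteness computation.} This is the only real computation and is the step I expect to need care. Using $1/\gamma_n=(n-1)/(2(n+1))$, the coefficients of $q$ are:
\begin{itemize}
\item $t^2$: $1+2/\gamma_n=2n/(n+1)$;
\item $tr$: $1+2/\gamma_n=2n/(n+1)$;
\item $r^2$: $\tfrac1{\gamma_n}\cdot\tfrac{n}{n-1}=n/(2(n+1))$.
\end{itemize}
The $t^2$ coefficient is positive. The determinant is
\[
\frac{2n}{n+1}\cdot\frac{n}{2(n+1)}-\Bigl(\frac{n}{n+1}\Bigr)^2=0 .
\]
So $q$ is positive semidefinite and degenerate, which proves \eqref{eq:antagonistic-cauchy}.

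The kernel of $q$ is spanned by $r=-2t$. Take $t=-1$, $r=2$, and equalize the remaining coordinates so that Cauchy--Schwarz is tight: $u_2=\dots=u_n=2/(n-1)$. Together with $z=se_1$, this gives equality in \eqref{eq:antagonistic-cauchy}, matching the example in the statement.

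\textbf{Step 4: optimality.} Since equality is attained, any constant larger than $1/\gamma_n$ in place of $(n-1)/(2(n+1))$ would make the inequality fail at this $(z,u)$. Hence the factor $1/\gamma_n$ is optimal.
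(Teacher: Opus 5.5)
Your proof is correct and is essentially the paper's argument. Your reduction to a vertex $z=se_k$ is the paper's bound $z^\top u\ge s\min_i u_i$, and your Cauchy--Schwarz step is its norm minimization with the remaining $n-1$ coordinates equalized. Your identity $q(t,r)=\frac{n}{2(n+1)}(2t+r)^2$ is the homogeneous form of the paper's one-variable identity $\frac{n(r-1)^2}{2(n+1)}$, which the paper reaches after flipping signs and normalizing $\ones^\top u=1$; working homogeneously just spares you those case distinctions. Your upper bound is the same $(u_k-\sigma)^2\ge0$ argument as the paper's.

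One slip in Step~4: the direction is backwards. Optimality means no \emph{smaller} constant than $1/\gamma_n$ works, since a larger one only weakens the lower bound. The attained equality, with $s>0$ and $\norm{u}^2+(\ones^\top u)^2>0$, is exactly what shows this.
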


\begin{proof}
We first prove the lower estimate~\eqref{eq:antagonistic-cauchy}.
If $s=0$ or $\ones^\top u=0$, the lower estimate is immediate (note that
$s=0$ implies $z=0$).  By replacing $u$ by $-u$ if necessary, assume
$\sigma:=\ones^\top u>0$.  Since $z/s$ is a convex combination of the unit
vectors, we get the estimate
\[
 z^\top u\ge s\min_i u_i.
\]
Since both sides of~\eqref{eq:antagonistic-cauchy} are homogeneous of degree two
in $u$, we may rescale $u$. Thus, after assuming
$\sigma:=\mathbf 1^\top u>0$, we replace $u$ by $u/\sigma$ and may
suppose without loss of generality that $\mathbf 1^\top u=1$.
Set \(k:=\min_i u_i\). If \(k\ge 0\), then
\(z^\top u\ge sk\ge0\), and~\eqref{eq:antagonistic-cauchy} follows immediately.
Hence we may assume \(k<0\).
Among all vectors $u\in\mathbb R^n$ satisfying
$\mathbf 1^\top u=1$ and $\min_i u_i=k$, the squared Euclidean norm
$\|u\|^2=\sum_i u_i^2$ is minimized when one coordinate equals $k$
and the remaining $n-1$ coordinates are all equal to $(1-k)/(n-1)$.
Hence,
\[
\|u\|^2
\ge
k^2+\frac{(1-k)^2}{n-1}.
\]
\iffalse
For fixed
sum $1$ and fixed minimum $k$, the Euclidean norm is minimized when one
coordinate equals $k$ and all remaining coordinates are equal to
$(1-k)/(n-1)$. Hence
\[
 \norm{u}^2
 \ge
 k^2+\frac{(1-k)^2}{n-1}.
\]
\fi
With $r=-k\ge 0$ this gives
\[
 \frac{n-1}{2(n+1)}\left(\norm{u}^2+1\right)+k
 \ge
 \frac{n-1}{2(n+1)}
 \left(r^2+\frac{(1+r)^2}{n-1}+1\right)-r
 =
 \frac{n(r-1)^2}{2(n+1)}\ge0.
\]
Therefore
\[
k\ge -\frac{n-1}{2(n+1)}\bigl(\|u\|^2+1\bigr).
\]
With
$z^\top u\ge s\min_i u_i=sk$ and  \(\mathbf 1^\top u=1\), this yields
\[
(z^\top u)(\mathbf 1^\top u)
=z^\top u
\ge sk
\ge
-\frac{n-1}{2(n+1)}s
\bigl(\|u\|^2+(\mathbf 1^\top u)^2\bigr).
\]
For the upper estimate~\eqref{eq:cooperative-cauchy}, let $\sigma:=\ones^\top u$.  For every coordinate,
we get the inequality 
\[ u_i\sigma\le (u_i^2+\sigma^2)/2\le(\norm{u}^2+\sigma^2)/2.\]  
Multiplying by $z_i\ge0$ and summing over $i$ yields
\[
(z^\top u)(\mathbf 1^\top u)
=
\sum_i z_i u_i\sigma
\le
\frac12\bigl(\|u\|^2+\sigma^2\bigr)\sum_i z_i
=
\frac{s}{2}
\bigl(\|u\|^2+(\mathbf 1^\top u)^2\bigr),
\]
showing
\eqref{eq:cooperative-cauchy}.
\end{proof}
Moreover, the equality case in~\eqref{eq:antagonistic-cauchy} is characterized as
follows. If \(s>0\) and \(\sigma:=\mathbf 1^\top u\ne0\), then equality in~\eqref{eq:antagonistic-cauchy} can hold only if, after a permutation of coordinates,
\[
z=se_k,\qquad
u_k=-\sigma,\qquad
u_j=\frac{2\sigma}{n-1}\quad(j\ne k).
\]
This follows because equality must hold both in \(z^\top u\ge s\min_i u_i\) and in the
norm-minimization step; the latter forces one coordinate to equal
\(-\sigma\) and all remaining coordinates to equal \(2\sigma/(n-1)\), while
the former forces \(z\) to be supported on the minimizing coordinate.

Combining \eqref{eq:block-quadratic-form} with Lemma~\ref{ineq:main} gives the
central estimate
\begin{equation}\label{eq:block-lower-bound}
 u^\top DF_e(z)u
 \ge
 \rho_f(s)
 \left(\norm{u}^2+(\ones^\top u)^2\right)\geq 0,
 \qquad
 \rho_f(s):=
 \begin{cases}
 \Delta_f(s)/\gamma_n, & f''(s)\ge0,\\[2mm]
 \Theta_f(s)/2, & f''(s)<0.
 \end{cases}
\end{equation}
Indeed, if $f''(s)\ge0$, we use the lower estimate
\eqref{eq:antagonistic-cauchy}; if $f''(s)<0$, we use the upper estimate
\eqref{eq:cooperative-cauchy}.  Thus $f\in\D(\Rp)$ and $\Delta_f\ge0$ imply
that every resource block is monotone.
Conversely, the equality case in Lemma~\ref{ineq:main} shows that the upper
curvature condition is also necessary at the level of a single resource: if
$\Delta_f(s)<0$ at some load $s>0$, then $s f''(s)>\gamma_n f'(s)$ and hence
$f''(s)>0$.  For $z=se_1$ and
$u=(-1,2/(n-1),\ldots,2/(n-1))$ we obtain
\begin{equation}\label{eq:tight-direction}
 u^\top DF_e(z)u=\Delta_f(s)<0.
\end{equation}
Since $\Delta_f$ is continuous, the same strict inequality persists in a
neighbourhood of $z$.

Now we need one more Lemma to deal with functions in $f\in Q_n(\mathbb R_+)\cap \mathcal D_{\mathrm{mc-si}}(\mathbb R_+)$.
\begin{lemma}[Strictness of one resource block]
\label{lem:resource-strictness}
Let $f\in Q_n(\mathbb R_+)\cap \mathcal D_{\mathrm{mc-si}}(\mathbb R_+)$,
and let $F_e$ be the corresponding one-resource marginal cost block. Then
\[
\langle F_e(z^1)-F_e(z^0), z^1-z^0\rangle>0 \text{ for all  $z^0,z^1\in\mathbb R_+^n$ with $z^0\neq z^1$.}
\]
\end{lemma}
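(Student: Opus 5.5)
Fix $z^0\neq z^1$ in $\R_+^n$ and follow the segment $z^t=z^0+t h$, $h:=z^1-z^0$, $t\in[0,1]$. This segment stays in $\R_+^n$. By the fundamental theorem of calculus,
\[
\langle F_e(z^1)-F_e(z^0),h\rangle=\int_0^1 h^\top DF_e(z^t)h\,dt .
\]
Write $s(t)=\ones^\top z^t$. By \eqref{eq:block-lower-bound}, the integrand is at least $\rho_f(s(t))\bigl(\norm{h}^2+(\ones^\top h)^2\bigr)\ge0$, since $f\in Q_n(\Rp)\cap\D(\Rp)$. So the integral is nonnegative. It remains to rule out that the integrand vanishes for almost every $t$; by continuity, that would mean it vanishes for all $t\in[0,1]$.

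\textbf{Case $\ones^\top h=0$.} Then $s(t)\equiv s$ is constant, and \eqref{eq:block-quadratic-form} reduces the integrand to $f'(s)\norm{h}^2$. The map $g(x):=f(x)+xf'(x)$ is strictly increasing on every nondegenerate interval and $g'=f'+\Theta_f/2\cdot$(suitably), i.e.\ $g'(x)=2f'(x)+xf''(x)$. I still need $f'(s)>0$ at this particular $s$, and strict monotonicity of $g$ alone does not give this pointwise. So I will not use the integral here. Instead I compute directly:
\[
\langle F_e(z^1)-F_e(z^0),h\rangle=\sum_i\bigl(f(s)+z^1_if'(s)-f(s)-z^0_if'(s)\bigr)h_i=f'(s)\norm{h}^2 .
\]
This is only $\ge0$, so if $f'(s)=0$ equality could occur. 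This is the main obstacle.

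To resolve it, I will show that $f\in Q_n$ forces $f'>0$ wherever $g$ is strictly increasing. Suppose $f'(s)=0$ at some $s>0$. Since $f'\ge0$, $s$ is a minimum of $f'$, so $f''(s)=0$. The curvature inequality $xf''\le\gamma_n f'$ gives the differential inequality $(f')'\le(\gamma_n/x)f'$ on $(0,\infty)$. Gronwall applied backwards from $s$ then gives $f'(x)\ge f'(s)(x/s)^{\gamma_n}$ for $x<s$, which is useless. Applied forwards, it gives $f'(x)\le f'(s)(x/s)^{\gamma_n}=0$ for $x>s$. Hence $f'\equiv0$ on $[s,\infty)$, so $g$ is constant there, contradicting strict increase on $[s,s+1]$. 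If instead $s=0$, then $f'(0)=0$ and the inequality gives nothing at $0$, but $h\neq0$ with $\ones^\top h=0$ and $z^0,z^1\ge0$ force $s>0$. So $f'(s)>0$ and the value is $f'(s)\norm{h}^2>0$.

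\textbf{Case $\ones^\top h\neq0$.} Then $s(t)$ is strictly monotone, so the loads sweep a nondegenerate interval $J$. Apply the same forward-Gronwall argument: the set $Z=\{f'=0\}\cap(0,\infty)$ is either empty or a ray $[s_0,\infty)$, and the latter is excluded by strict monotonicity of $g$. Hence $f'>0$ on $(0,\infty)$, and also $\Delta_f$ and $\Theta_f$ are $\ge0$. I then look for a $t$ at which $\rho_f(s(t))>0$, since on the complementary set the lower bound degenerates.

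If $f''(s)\ge0$, then $\rho_f=\Delta_f/\gamma_n$, and this could vanish. So instead of $\rho_f$ I use the explicit form \eqref{eq:block-quadratic-form}. When $b\ge0$, the lower estimate from Lemma~\ref{ineq:main} shows that $h^\top DF_e h=0$ would require the equality configuration $z^t=s e_k$ with a fixed shape of $h$, together with $\Delta_f(s(t))=0$. That equality configuration $z^t=s(t)e_k$ can hold for at most one $t$ unless $h$ is a multiple of $e_k$. But a multiple of $e_k$ does not have the required shape $(-\sigma,2\sigma/(n-1),\dots)$ for $n\ge2$, so the integrand is positive for all but finitely many such $t$.

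When $b<0$, the upper estimate gives a bound of $\Theta_f/2=g'/2$ times a positive quantity. Strict increase of $g$ on $J$ ensures $g'>0$ on a set of positive measure in $J$.

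Combining the two sub-cases, the integrand is positive on a set of $t$ of positive measure, so the integral is strictly positive. The delicate bookkeeping in this second case, and the Gronwall argument showing $f'>0$, are the points I expect to need the most care.
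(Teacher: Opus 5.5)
Your proposal is correct and follows essentially the paper's route: the integral of $h^\top DF_e(z^t)h$, nonnegativity from \eqref{eq:block-lower-bound}, the split on $\ones^\top h$, and $f'>0$ on $(0,\infty)$ for the constant-load case; in the non-constant-load case you combine $\Theta_f>0$ on a set of positive measure with the equality characterization from Lemma~\ref{ineq:main}, where your ``at most one $t$ per index $k$ unless $h$ is a multiple of $e_k$'' count replaces the paper's pigeonhole argument on a subinterval. The only genuine difference is how you prove $f'>0$: you propagate a zero of $f'$ forward using $\Delta_f\ge0$ (so $x^{-\gamma_n}f'(x)$ is nonincreasing), whereas the paper propagates it backward using $\Theta_f\ge0$ (so $s^2f'(s)$ is nondecreasing); both force $f+xf'$ to be constant on a nondegenerate interval, and both are valid.
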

The proof is somewhat technical and can be found in Appendix~\ref{proof-strictness}.

\begin{proof}[Proof of Theorem~\ref{thm:classification}]
Let $\G=(N,E,(K_i)_{i\in N},(c_e)_{e\in E})$ be an atomic splittable congestion
game with $n$ players, strategy space $K=\bigtimes _{i \in N} K_i$, and VI operator $F$.

We first prove the VI-monotonicity statement~\ref{enum:char1}.  Suppose $c_e\in Q_n(\Rp)$ for
all resources.  For $x,y\in K$, put $h=x-y$ and write
$h_e=(h_{i,e})_{i\in N}$.  Since $K$ is convex,
\[
 F(x)-F(y)
 =
 \int_0^1 DF\bigl(y+t(x-y)\bigr)h\,\diff t.
\]
Using the resource decomposition of $DF$ and \eqref{eq:block-lower-bound}, we get
\begin{equation}\label{eq:global-monotonicity-integral}
 \scalar{F(x)-F(y)}{x-y}
 =
 \int_0^1\sum_{e\in E}
 h_e^\top DF_e\bigl(y_e+t h_e\bigr)h_e\,\diff t
 \ge0.
\end{equation}
This proves sufficiency for universal VI-monotonicity.

For necessity, suppose that $f\in\C$ and $f\notin Q_n(\Rp)$.  Choose
$s>0$ with $\Delta_f(s)<0$, set $z=se_1$, and use the direction $u$ from
\eqref{eq:tight-direction}.  For sufficiently small $\varepsilon>0$ we have
$z+\varepsilon u\in\Rp^n$ and
$u^\top DF_e(z+t\varepsilon u)u<0$ for all $t\in[0,1]$.  Consider the one-resource
game in which player $i$ has the interval
\[
 K_i:=\conv\{z_i,z_i+\varepsilon u_i\}\subset\Rp
\]
as strategy space and the single resource has cost $f$.  With
$y=z$ and $x=z+\varepsilon u$,
\[
 \scalar{F(x)-F(y)}{x-y}
 =
 \varepsilon^2\int_0^1
 u^\top DF_e(z+t\varepsilon u)u\,\diff t
 <0.
\]
Thus the VI operator is not monotone.  This proves item~\ref{enum:char1}.

We next prove the strict VI-monotonicity statement~\ref{enum:char2}. 
First assume
$c_e\in Q_n(\mathbb R_+)\cap\mathcal D_{\mathrm{mc-si}}(\mathbb R_+)$
for every resource $e\in E$. Let $x,y\in K$ with $x\ne y$, and put
$h=x-y$. Then there exists at least one resource $e$ with
$x_e\ne y_e$. By Lemma~\ref{lem:resource-strictness},
\[
(F_e(x_e)-F_e(y_e))^\top (x_e-y_e)>0
\]
for every such resource, while the resource contributions are nonnegative
for all other resources by the monotonicity part already proved. Hence
\[
\langle F(x)-F(y),x-y\rangle
=
\sum_{e\in E}
(F_e(x_e)-F_e(y_e))^\top (x_e-y_e)
>0.
\]
Thus the VI operator is strictly monotone.

Conversely, suppose that \(\mathcal C\) is \(n\)-player strictly
VI-monotone. By item~\ref{enum:char1}., necessarily \(\mathcal C\subseteq Q_n(\mathbb R_+)\).
It remains to prove that every \(f\in\mathcal C\) belongs to
\(\mathcal D_{\mathrm{mc-si}}(\mathbb R_+)\). Let
\[
\phi(s):=f(s)+sf'(s).
\]
Since \(f\in\mathcal D(\mathbb R_+)\), we have
\[
\phi'(s)=2f'(s)+sf''(s)=\Theta_f(s)\ge0,
\]
so \(\phi\) is nondecreasing. If \(f\notin
\mathcal D_{\mathrm{mc-si}}(\mathbb R_+)\), then there exist \(a<b\) such
that \(\phi(a)=\phi(b)\). Since \(\phi\) is nondecreasing, it is constant
on \([a,b]\).

Consider the one-resource game in which player \(1\) has strategy interval
\(K_1=[a,b]\), while all other players are fixed at zero. For two distinct
profiles with player-$1$ loads \(a\) and \(b\), respectively, the marginal
cost is the same:
$F_1(a)=\phi(a)=\phi(b)=F_1(b).$
Hence,
$\langle F(x)-F(y),x-y\rangle=0$
although \(x\ne y\), contradicting strict monotonicity. Therefore
\(f\in\mathcal D_{\mathrm{mc-si}}(\mathbb R_+)\).

 We now prove the strong VI-monotonicity statement~\ref{enum:char3}. 
 We first prove sufficiency. Let
$c_e\in Q_n^{\mathrm{sm}}(\mathbb R_+)$ for every resource $e$.
Fix a game with compact strategy space $K$. For each resource $e$, let
\[
L_e:=\max_{x\in K}\bar x_e .
\]
Since $K$ is compact, $L_e<\infty$. Since $\Delta_{c_e}$ and
$\Theta_{c_e}$ are continuous and strictly positive on $[0,L_e]$, the
number
\[
m_e
:=
\min_{s\in[0,L_e]}
\left\{
\frac{\Delta_{c_e}(s)}{\gamma_n},
\frac{\Theta_{c_e}(s)}{2}
\right\}
\]
is strictly positive. By the resource-block inequality~\eqref{eq:block-lower-bound},
for every resource direction $h_e$ and every feasible load vector $z_e$
with total load at most $L_e$,
\[
h_e^\top DF_e(z_e)h_e
\ge
m_e\|h_e\|^2 .
\]
Let
\[
\mu:=\min_{e\in E}m_e>0.
\]
Then, for all $x,y\in K$ and $h=x-y$,
\[
\begin{aligned}
\langle F(x)-F(y),x-y\rangle
&=
\int_0^1
\sum_{e\in E}
h_e^\top
DF_e(y_e+t h_e)
h_e\,dt  \\
&\ge
\mu\sum_{e\in E}\|h_e\|^2
=
\mu\|x-y\|^2 .
\end{aligned}
\]
Thus the VI operator is strongly monotone on this fixed compact game.
Conversely, suppose that $f\notin Q_n^{\mathrm{sm}}(\mathbb R_+)$.
If $f\notin Q_n(\mathbb R_+)$, then Theorem~\ref{thm:classification}~(\ref{enum:char1}.) already gives
a game whose VI operator is not monotone, hence not strongly monotone.
It remains to consider the case $f\in Q_n(\mathbb R_+)$, but either
$\Delta_f(s_0)=0$ or $\Theta_f(s_0)=0$ for some $s_0\ge0$.
First suppose that $\Delta_f(s_0)=0$ for some $s_0>0$. Take the
one-resource construction used in the necessity proof of Theorem~\ref{thm:classification}~(\ref{enum:char1}.):
\[
z=s_0 e_1,
\qquad
u=\left(-1,\frac{2}{n-1},\ldots,\frac{2}{n-1}\right).
\]
Then
\[
u^\top DF_e(z)u=\Delta_f(s_0)=0.
\]
Choose $\varepsilon>0$ small enough so that
$z+t u\in\mathbb R_+^n$ for all $t\in[0,\varepsilon]$, and define
\[
K_i:=\operatorname{conv}\{z_i,z_i+\varepsilon u_i\}.
\]
For $0<\tau\le\varepsilon$, set $x=z+\tau u$ and $y=z$. Then
\[
\frac{\langle F(x)-F(y),x-y\rangle}{\|x-y\|^2}
=
\frac{\int_0^1 u^\top DF_e(z+t\tau u)u\,dt
}{\|u\|^2}
\longrightarrow
\frac{u^\top DF_e(z)u}{\|u\|^2}
=0
\]
as $\tau\downarrow0$. Hence no positive strong-monotonicity modulus can
exist for this compact game.
Second suppose that $\Theta_f(s_0)=0$ for some $s_0\ge0$. Consider a
one-resource game with one active player whose strategy interval is
\[
K_1=[s_0,s_0+\varepsilon],
\]
and all other players fixed at zero. Along this interval the scalar
marginal cost is
\[
\phi(s)=f(s)+sf'(s),
\qquad
\phi'(s)=\Theta_f(s).
\]
For $x=s_0+\tau$ and $y=s_0$,
\[
\frac{(F(x)-F(y))(x-y)}{|x-y|^2}
=
\frac{\phi(s_0+\tau)-\phi(s_0)}{\tau}
\Rightarrow
\phi'(s_0)=\Theta_f(s_0)=0.
\]
Thus strong monotonicity again fails on this compact game.
Therefore pointwise positivity of both $\Delta_f$ and $\Theta_f$ is
necessary. This proves item~\ref{enum:char3}. and completes the proof.

\iffalse
We now prove the strong VI-monotonicity statement~\ref{enum:char3}.  Suppose
$c_e\in Q_n^{\mathrm{sm}}(\mathbb R_+)$ and let $m_e>0$ satisfy
$\Delta_{c_e}(s)\ge m_e$ and $\Theta_{c_e}(s)\ge m_e$ for all $s\ge0$.  Then
\eqref{eq:block-lower-bound} gives
\[
 h_e^\top DF_e(z_e)h_e
 \ge
 \min\left\{\frac{m_e}{\gamma_n},\frac{m_e}{2}\right\}\norm{h_e}^2.
\]
After summing over resources and integrating along the segment from $y$ to $x$,
we obtain strong monotonicity with modulus
\[
 \mu=\min_{e\in E}\min\left\{\frac{m_e}{\gamma_n},\frac{m_e}{2}\right\}>0.
\]
Conversely, if $f\notin Q_n^{\mathrm{sm}}(\mathbb R_+)$, then either $f\notin Q_n(\Rp)$, in which
case item~\ref{enum:char1}. already rules out strong monotonicity, or else no positive
margin exists for $\Delta_f$ or for $\Theta_f$.  If there is a sequence
$s_k$ with $\Delta_f(s_k)\downarrow0$, the tight construction
\eqref{eq:tight-direction} gives one-resource games for which the normalized quadratic form
$
\frac{u^\top DF_e(z)u}{\|u\|^2}$
tends to zero.  If there is a sequence $s_k$ with
$\Theta_f(s_k)\downarrow0$, the one-active-player construction with
$z=s_k e_1$ and direction $u=e_1$ gives the same conclusion, since
$u^\top DF_e(z)u=\Theta_f(s_k)$.  No positive  strong-monotonicity modulus
can therefore be guaranteed from $f$ alone.  This proves item~\ref{enum:char3}. and completes the proof.
\fi
\end{proof}

\begin{remark}\label{rem:parallel}
The one-resource interval construction used in the necessity part can also be recovered as a two parallel-edge network game, provided the cost class $\mathcal C$ contains affine costs. Indeed, the interval $[0,d_i]$ can be identified with the two-edge simplex
\[
\{(x_{i1},x_{i2})\in\mathbb R_+^2 : x_{i1}+x_{i2}=d_i\}
\]
by writing $(x_i,d_i-x_i)$. If the first edge has the critical cost function $f\in\mathcal C$ and the second edge has an affine cost $a(t)=\alpha t+\beta\in\mathcal C$, then the curvature contribution comes only from the first edge, while the second edge contributes only a positive semidefinite linear term. Hence any violation coming from the interval construction persists, after choosing the violating direction sufficiently small if necessary.
\end{remark}

\begin{remark}[Structure of $Q_n(\Rp)$]\label{ref:rem-costs}
The set $Q_n(\Rp)$ contains quite interesting types of functions
and we refer to Appendix~\ref{sec:cost-classes} for an overview.
Let us only mention here the case of polynomials. For every $n\ge2$, all polynomials of
degree at most three with nonnegative coefficients are for instance included in $Q_n(\Rp)$.  
For the case $n=3$, for instance, 
we obtain
\[ \Delta_p(x) = \gamma_3 p'(x) -p''(x)x=4 p'(x)- p''(x)x,\]
which satisfies $\Delta_p(x)\geq 0$
for all polynomials $p$ of degree at most $5$.
More generally, a monomial $x^d$ satisfies the upper
curvature condition precisely when $d-1\le\gamma_n$.
\end{remark}

\section{Characterizing Convergence of Frank--Wolfe Dynamics}
\label{sec:instability-general}

This section proves Theorem~\ref{thm:fw-characterization} and the explicit
counterexample in Corollary~\ref{cor:degree-six-counterexample}.  The
main point is that the curvature conditions from
Theorem~\ref{thm:classification} are not merely characterizing monotone VIs: after allowing natural affine transformations of costs within a class of cost functions, it is also
necessary for both, local and global asymptotic stability of the Euclidean-regularized
Frank--Wolfe dynamics. The standard definitions of local and global asymptotic stability of ODEs can be found in the Appendix~\ref{sec:apx-stability}.

In the following, we first formally define regularized FW with Euclidean regularization in Sec.~\ref{subsec:erfw} 
and then introduce in Sec.~\ref{subsec:ulgfw} the notions of universal local and global FW-stability,
respectively. In Sec.~\ref{subsec:local}, we recap differentiability properties
of the resulting Jacobian of the right-hand side of the dynamic and state
local stability and instability conditions. All this together is then used
for the proof of our main convergence characterization
in Theorem~\ref{thm:fw-characterization}, see Sec.~\ref{subsec:proof}.

\subsection{Euclidean Regularized FW}\label{subsec:erfw}
For \(\eta>0\) and Euclidean regularization, define the regularized FW (or VI
best-response) map by
\begin{equation}\label{eq:regularized-best-response}
 \beta_i^\eta(z_i)
 :=
 \arg\min_{y_i\in K_i}
 \left\{
 \scalar{z_i}{y_i}+\frac{\eta}{2}\norm{y_i}^2
 \right\},
 \qquad i\in N,
\end{equation}
where \(z_i\in\R^E\).  
Equivalently, using first-order conditions, \(\beta_i^\eta(z_i)\) is the Euclidean
projection of \(-z_i/\eta\) onto \(K_i\).  We write
\(\beta^\eta(z)=(\beta_i^\eta(z_i))_{i\in N}\) and define
\[
 \Phi^\eta(x):=\beta^\eta(F(x)).
\] 
The associated Euclidean-regularized FW vector field is
\begin{equation}\label{eq:def-G-fw}
 G(x):=\Phi^\eta(x)-x=\beta^\eta(F(x))-x,
\end{equation}
and the Euclidean-regularized FW dynamics are given as
\begin{equation}\label{eq:fw-reg}
 \dot x=G(x)=\beta^\eta(F(x))-x.
\end{equation}
Using first-order conditions, a regularized equilibrium $x^*$ is characterized by the VI
\[  \scalar{F(x^*)+\eta x^*}{x^*-y}\leq 0 \text{ for all }y\in K.\]
Note that if the operator $F$ is monotone, the above operator $F+\eta \id$
is strongly monotone (with modulus $\eta>0$) and thus
there is a unique regularized equilibrium
for which~\eqref{eq:fw-reg} is globally asymptotically stable
(see Proposition~\ref{prop:RFW-apx}).

\subsection{Universal local and global FW-stability}\label{subsec:ulgfw}
For a cost function $f\in\D(\Rp)$, write
\[
\mathrm{Aff}_+(f):=\{\alpha f+\beta:\alpha>0,\ \beta\in\R\}
\]
for its positive affine hull.  A set $\C\subseteq\D(\Rp)$ is called
\emph{PAT-closed} if $\mathrm{Aff}_+(f)\subseteq\C$ for every $f\in\C$.
Closedness of $\C$ w.r.t. positive affine transformations are needed 
to show necessity below because the
rest-point equations depend on the magnitude of the costs, whereas the curvature
condition defining $Q_n(\Rp)$ depends only on $f'$ and $f''$.

\begin{definition}[Universal local and global FW-stability]
\label{def:q-fw}
Let $n\ge2$ and let $\mathcal C\subseteq\mathcal D(\mathbb R_+)$ be
PAT-closed.
We say that $\mathcal C$ is \emph{universally locally FW-stable} if, for
every $\eta>0$ and every $n$-player atomic splittable congestion game with
resource costs in $\mathcal C$, every regularized equilibrium of the
Euclidean-regularized FW dynamics is locally Lyapunov stable.
We say that $\mathcal C$ is \emph{universally globally FW-stable} if, for
every $\eta>0$ and every $n$-player atomic splittable congestion game with
resource costs in $\mathcal C$, the regularized equilibrium set is globally
asymptotically stable.
\end{definition}
For definitions of local Lyapunov and global asymptotic stability, we refer to Appendix~\ref{sec:apx-stability}.
\iffalse
\begin{definition}[Universal local and global FW-stability]
\label{def:q-fw}
Let $n\ge2$.
 \begin{enumerate}
 \item \label{enum:fw-inclusion} $Q_n^{\mathrm{FW-loc}}(\Rp)\subseteq\D(\Rp)$ is defined as the largest PAT-closed set so that for every $\eta>0$ and every $n$-player atomic
splittable congestion game with resource
costs in $Q_n^{\mathrm{FW-loc}}(\Rp)$, every regularized equilibrium of
\eqref{eq:fw-reg} is locally stable.
\item  The set $Q_n^{\mathrm{FW-glob}}(\Rp)$ is defined as the largest  PAT-closed set
 so that for every $\eta>0$ and every $n$-player atomic
splittable congestion game with resource
costs in $Q_n^{\mathrm{FW-loc}}(\Rp)$, the regularized equilibrium set is globally asymptotically stable
under \eqref{eq:fw-reg}.
\end{enumerate}
\end{definition}
\fi
\subsection{Local Stability Behavior}\label{subsec:local}
For the main proof Theorem~\ref{thm:fw-characterization},
we need to rework some basic results regarding
the local stability behavior of an equilibrium under the dynamic~\eqref{eq:fw-reg}.

We first compute the derivative of the vector field
for points in the relative interior of $K$.
The proof relies on the differentiability of the solution map of a
strongly convex parametric optimization problem and is standard in the literature~\cite[Section~6.3]{bertsekas2016nonlinear}. For completeness, we provide it in the Appendix~\ref{sec:apx-lemmabeta}.
\begin{lemma}\label{lem:derivative-beta}
Let \(x^*\in\relint(K)\) be a rest point (regularized equilibrium) of \eqref{eq:fw-reg}.  Then
\(\Phi^\eta\) is continuously differentiable in a neighbourhood of \(x^*\) and
\[
 D\Phi^\eta(x^*)|_T
 =
 -\frac1\eta DF(x^*)|_T,
\]
where $T=T_K$ and $DF(x^*)|_T=P_TDF(x^*)P_T$ is the tangent-space
compression introduced in Section~\ref{sec:vi-operator}.  Consequently,
\begin{equation}\label{eq:linearization-reg-fw}
 DG(x^*)|_T
 =
 -I-\frac1\eta DF(x^*)|_T .
\end{equation}
\end{lemma}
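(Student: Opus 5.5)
The plan is to work player by player. Since $\beta^\eta$ acts blockwise, I first analyse a single map $\beta_i^\eta(w)=\Pi_{K_i}(-w/\eta)$, where $\Pi_{K_i}$ denotes Euclidean projection onto $K_i$. At the rest point we have $x^*=\Phi^\eta(x^*)$, so $x_i^*=\Pi_{K_i}(-F_i(x^*)/\eta)$ and $x_i^*\in\relint(K_i)$. The argument has three steps.

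\textbf{Step 1: the projection is affine near the rest point.} Let $A_i:=\mathrm{aff}(K_i)$ with direction space $T_i=\lin(K_i-K_i)$, and let $\Pi_{A_i}$ be the affine orthogonal projection onto $A_i$. The rest-point condition means that $-F_i(x^*)/\eta-x_i^*$ lies in the normal cone of $K_i$ at $x_i^*$. Because $x_i^*$ is in the relative interior, this normal cone is exactly $T_i^\perp$. Hence $\Pi_{A_i}(-F_i(x^*)/\eta)=x_i^*$.

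Now take $w$ near $F_i(x^*)$. Then $\Pi_{A_i}(-w/\eta)$ is close to $x_i^*$, so it lies in $K_i$. For any point $p$ with $\Pi_{A_i}(p)\in K_i$, we have $\Pi_{K_i}(p)=\Pi_{A_i}(p)$. The reason is the Pythagorean decomposition $\norm{p-y}^2=\norm{p-\Pi_{A_i}p}^2+\norm{\Pi_{A_i}p-y}^2$, valid for all $y\in A_i\supseteq K_i$.

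Consequently, on a neighbourhood $U_i$ of $F_i(x^*)$ the map $\beta_i^\eta$ equals the affine map
\[
w\mapsto \Pi_{A_i}(-w/\eta)=x_i^*-\tfrac1\eta P_{T_i}\bigl(w-F_i(x^*)\bigr).
\]
This map is smooth, and its derivative is $-\tfrac1\eta P_{T_i}$.

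\textbf{Step 2: chain rule.} $F$ is $C^1$ because the costs are $C^2$. Hence $F(x)\in\bigtimes_i U_i$ for all $x$ in a neighbourhood of $x^*$. On this neighbourhood $\Phi^\eta=\beta^\eta\circ F$ is $C^1$ with
\[
D\Phi^\eta(x)=-\tfrac1\eta P_T\,DF(x).
\]
Here we use that $T=T_K=\bigtimes_i T_i$, since $K$ is a product, so $P_T=\blkdiag(P_{T_i})$.

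\textbf{Step 3: restriction to the tangent space.} Restricting to $T$ gives
\[
D\Phi^\eta(x^*)|_T=P_T D\Phi^\eta(x^*)P_T=-\tfrac1\eta P_TDF(x^*)P_T=-\tfrac1\eta DF(x^*)|_T,
\]
using $P_T^2=P_T$. Finally, $DG=D\Phi^\eta-I$, and $P_T I P_T$ is the identity on $T$. This yields \eqref{eq:linearization-reg-fw}.

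The main obstacle is Step 1, namely justifying that near a relative-interior point the projection onto $K_i$ agrees with the projection onto its affine hull. The delicate points are two. First, the normal cone at a relative-interior point is $T_i^\perp$. Second, the preimage of a relative neighbourhood under $\Pi_{A_i}$ is an open set. I would prove the first by noting that $x_i^*\pm\varepsilon v\in K_i$ for every $v\in T_i$ and small $\varepsilon>0$. The second follows from continuity of $\Pi_{A_i}$ together with the fact that some relative ball $B(x_i^*,r)\cap A_i$ is contained in $K_i$.
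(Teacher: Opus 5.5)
Your proof is correct. It rests on the same key observation as the paper's proof: near a relative-interior point, $K$ coincides with $\operatorname{aff}(K)$, so $\beta^\eta$ locally solves the problem over the affine hull. You exploit that observation differently, though.

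The paper writes $\operatorname{aff}(K)=\{y: Ay=b\}$ with $A$ of full row rank. It then invokes parametric sensitivity theory to get differentiability of $\beta^\eta$ and of the multiplier. Finally it differentiates the KKT system and projects onto $T=\ker A$ to eliminate the $A^\top D\lambda$ term.

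You instead identify the local map in closed form,
\[
\beta_i^\eta(w)=x_i^*-\tfrac1\eta P_{T_i}\bigl(w-F_i(x^*)\bigr),
\]
using the normal cone $T_i^\perp$ at relative-interior points and the Pythagorean identity for projection onto an affine subspace. This makes the argument fully self-contained: no external sensitivity theorem, no multipliers, no rank normalization. It also shows more, namely that $\beta^\eta$ is locally affine and hence $C^\infty$ near $F(x^*)$. Your check that $\Pi_{A_i}(-w/\eta)$ lands in $K_i$ also makes explicit a step the paper leaves implicit. The paper passes from "the minimizer over $K$ stays in $B_\varepsilon(x^*)$" to "it solves the problem over $\operatorname{aff}(K)$" via local-to-global optimality on a convex set.

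What the paper's KKT route buys is generality. It is the template that would carry over to non-Euclidean strongly convex regularizers, where no closed-form projection exists and the derivative involves the regularizer's Hessian compressed to $T$. Working player by player, as you do, versus on $K$ at once, as the paper does, is immaterial because $K$ is a product and $P_T$ is block diagonal.
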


\begin{corollary}[Spectral criterion for regularized FW]
\label{cor:fw-spectral-criterion}
Let \(x^*\in\relint(K)\) be a rest point of \eqref{eq:fw-reg}, and let
\(\mu\) be an eigenvalue of \(DF(x^*)|_T\).  Then
$ -1-\frac{\mu}{\eta}$
is an eigenvalue of \(DG(x^*)|_T\).  In particular, if
\(DF(x^*)|_T\) has an eigenvalue \(\mu\) with \(\Repart(\mu)<-\eta\), then
\(x^*\) is linearly unstable, and hence locally unstable for the
Euclidean-regularized Frank--Wolfe dynamics, where local stability is used in the
weakest form of Lyapunov, see Appendix~\ref{sec:apx-stability}.  If all eigenvalues of
\(DF(x^*)|_T\) have nonnegative real part, then \(x^*\) is locally
exponentially stable.
\end{corollary}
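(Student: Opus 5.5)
The corollary follows from Lemma~\ref{lem:derivative-beta} by linear algebra on $T$, plus the standard linearization theorems for $C^1$ vector fields. I would first restrict the dynamics to the affine hull of $K$. Since $\Phi^\eta(x)\in K$ for every $x$, we have $G(x)=\Phi^\eta(x)-x\in T$ whenever $x\in\operatorname{aff}(K)$. Thus the affine hull is invariant, and trajectories starting in $K$ stay in $K$: the flow is a convex combination pull toward a point of $K$. So it is natural to regard \eqref{eq:fw-reg} as an ODE on $x^*+T$. By Lemma~\ref{lem:derivative-beta}, $G$ is $C^1$ near $x^*$, and its linearization on $T$ is the operator $DG(x^*)|_T=-I-\frac1\eta DF(x^*)|_T$ from \eqref{eq:linearization-reg-fw}.

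The spectral statement is then immediate. If $DF(x^*)|_T v=\mu v$ with $v\in T\otimes\mathbb C$ nonzero, then
\[
DG(x^*)|_T v=-v-\tfrac{\mu}{\eta}v .
\]
Equivalently, $\spec(DG(x^*)|_T)=\{-1-\mu/\eta:\mu\in\spec(DF(x^*)|_T)\}$, since the two operators differ by an affine function of one another.

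For instability, suppose $\Repart(\mu)<-\eta$. Then $\Repart(-1-\mu/\eta)=-1-\Repart(\mu)/\eta>0$, so the linearization has an eigenvalue with positive real part. I would then invoke the instability half of Lyapunov's indirect method for $C^1$ fields on the finite-dimensional space $x^*+T$. Because $x^*\in\relint(K)$, a whole neighbourhood of $x^*$ in $x^*+T$ lies inside $K$. The unstable trajectories the theorem produces can therefore be started from feasible points arbitrarily close to $x^*$, and they leave a fixed ball. This contradicts Lyapunov stability within $K$.

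For stability, suppose every $\mu$ has $\Repart(\mu)\ge0$. Then every eigenvalue of $DG(x^*)|_T$ has real part at most $-1<0$, so the linearization is Hurwitz. The direct Lyapunov method then gives local exponential stability on $x^*+T$, and hence on $K$ near $x^*$. Concretely, one takes a quadratic Lyapunov function $V(h)=h^\top Ph$ solving $A^\top P+PA=-I$ on $T$ and absorbs the $o(\|h\|)$ remainder of $G$.

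The only real subtlety is the reduction to $T$. One has to check that perturbations within $K$ are exactly perturbations in $T$, and that the tangent-space compression $P_TDF(x^*)P_T$ is the correct object rather than the full Jacobian. This holds because $G$ maps $\operatorname{aff}(K)$ into $T$, so its derivative along $T$ already takes values in $T$ and $P_T$ acts trivially on the output. The relative-interior assumption is what makes this reduction legitimate.
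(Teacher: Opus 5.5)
Your proposal is correct and follows the same route as the paper. You read off the spectrum of $DG(x^*)|_T$ from the identity $DG(x^*)|_T=-I-\frac1\eta DF(x^*)|_T$ of Lemma~\ref{lem:derivative-beta}, and then invoke the standard linearization (indirect Lyapunov) theorems. Your explicit reduction of the flow to the invariant affine slice $x^*+T$ uses $x^*\in\relint(K)$ so that a full neighbourhood of $x^*$ in $x^*+T$ lies in $K$; this is a welcome elaboration of a step the paper leaves implicit when it simply cites Khalil and Perko.
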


\begin{proof}
The first statement follows immediately from \eqref{eq:linearization-reg-fw}.
If \(\Repart(\mu)<-\eta\), then
\[
 \Repart\left(-1-\frac{\mu}{\eta}\right)>0,
\]
so the linearization has an eigenvalue with positive real part.  Conversely, if
\(\Repart(\mu)\ge0\) for every eigenvalue \(\mu\) of \(DF(x^*)|_T\), then every
eigenvalue of \(DG(x^*)|_T\) has real part at most \(-1\).  The claims follow
from the standard linearization theorem for smooth finite-dimensional ODEs; see,
for example, Khalil~\cite[Theorem~4.13]{Khalil2002} or
Perko~\cite[Section~2.9]{Perko2001}.
\end{proof}

\subsection{Proof of Theorem~\ref{thm:fw-characterization}}\label{subsec:proof}
\begin{proof}[Proof of Theorem~\ref{thm:fw-characterization}]

We first prove \ref{enum:mon}.$\Rightarrow$\ref{enum:ugs}. Let
\(\mathcal C\subseteq Q_n(\mathbb R_+)\). By Theorem~1, every
\(n\)-player game with costs in \(\mathcal C\) has a monotone VI operator
\(F\). Then, the proof follows from Proposition~\ref{prop:RFW-apx}.\footnote{
Hadikhanloo et al.~\cite[Thm.~4.2]{Hadikhanloo2021} prove a similar result but for nonatomic population games. In Section \emph{6.3. Extensions} of their paper,
they describe a general convex game setting and mention convergence results
that would apply more generally.
To have a self-contained main result, however, we formally prove in Proposition~\ref{prop:RFW-apx}
(Appendix~\ref{apx:global-conv})
global asymptotic stability of Euclidian regularized FW for our setting.}
In particular, it follows that the unique regularized
equilibrium is locally stable proving also \ref{enum:mon}.$\Rightarrow$\ref{enum:uls}.

\iffalse
We first prove $Q_n(\Rp)\subseteq Q_n^{\mathrm{FW-glob}}(\Rp)\subseteq Q_n^{\mathrm{FW-loc}}(\Rp)$.  The inclusion $Q_n(\Rp)\subseteq Q_n^{\mathrm{FW-loc}}(\Rp)$  follows by
Hadikhanloo et al.~\cite{Hadikhanloo2021}(Thm. 4.2.) and Theorem~\ref{thm:classification}.
To see this,  note that in Appendix~\ref{sec:closure}, we prove that $Q_n(\Rp)$
is closed under positive affine transformations. Theorem~\ref{thm:classification} implies that for any $n$-player atomic splittable congestion game, the resulting VI operator
$F$ is monotone.
Hence, Hadikhanloo et al.~\cite{Hadikhanloo2021}(Thm. 4.2.) is applicable
leading to global asymptotic stability of regularized FW for any such game. The inclusion $Q_n^{\mathrm{FW-glob}}(\Rp)\subseteq Q_n^{\mathrm{FW-loc}}(\Rp)$ is trivial.

With the previous chain of inclusions,  it suffices to prove $Q_n(\Rp) \supseteq Q_n^{\mathrm{FW-loc}}(\Rp)$.
\fi

We now prove the necessity statement \ref{enum:uls}.$\Rightarrow$\ref{enum:mon}.
Suppose that \(\mathcal C\) is PAT-closed and universally locally FW-stable.
Assume by contradiction that
\(\mathcal C\not\subseteq Q_n(\mathbb R_+)\) and  choose
\(f\in\mathcal C\setminus Q_n(\mathbb R_+)\). 
For this $f$, we will construct a game with interval-based strategy spaces and costs from the set
$\mathrm{Aff}_+(f)$ having an interior regularized equilibrium (hence a rest point of the dynamic)
for which the dynamic is linearly unstable
implying local instability leading to the contradiction $f\notin \C$.  

As \(f\in\D(\Rp)\setminus Q_n(\Rp)\), there exists \(s>0\) such that
\[
 s f''(s)>\gamma_n f'(s),
 \qquad
 \gamma_n=\frac{2(n+1)}{n-1}.
\]
Choose \(\delta>0\) sufficiently small such that \(\delta<1/(n+1)\) and
\begin{equation}\label{eq:general-negative-mu-condition}
 M_\delta
 :=
 \frac{2(n+1)f'(s)-s(n-1)f''(s)+s(n^2-1)\delta f''(s)}{n+3}
 <0.
\end{equation}
Let \(\alpha>0\), choose \(\eta>0\) with
\[
 0<\eta<-\alpha M_\delta,
\]
and define the positive affine transform
\[
 c(t):=\alpha f(t)+\beta,
 \qquad
 \beta:=s\bigl(\alpha f'(s)+\eta\bigr)(1-(n+1)\delta)-\alpha f(s).
\]
Then
\begin{equation}\label{eq:rest-shift-condition-general}
 c(s)=s\bigl(c'(s)+\eta\bigr)(1-(n+1)\delta),
 \qquad
 c'(s)=\alpha f'(s),
 \qquad
 c''(s)=\alpha f''(s).
\end{equation}

We now construct an \(n\)-player game with \(n\) resources.  Let
\(N=E=\{1,\dots,n\}\), and define
\[
 x^*_{i,e}
 :=
 \begin{cases}
 s(1-(n-1)\delta), & e=i,\\
 s\delta, & e\ne i,
 \end{cases}
 \qquad
 q_{i,e}
 :=
 \begin{cases}
 -\frac{n-1}{2}, & e=i,\\
 1, & e\ne i.
 \end{cases}
\]
For every resource \(e\), the load at \(x^*\) is equal to \(s\).  For
\(\varepsilon>0\) small enough, the line segments
\[
 K_i:=\{x_i^*+\theta_i q_i:\ -\varepsilon\le\theta_i\le\varepsilon\}
\]
are contained in \(\Rp^n\), and \(x_i^*\in\relint(K_i)\).  Put cost \(c\) on
every resource.

We first verify that \(x^*\) is a rest point (i.e., an regularized equilibrium).  Since
\[
 \sum_{e=1}^n q_{i,e}=\frac{n-1}{2}
 \qquad\text{and}\qquad
 \sum_{e=1}^n q_{i,e}x^*_{i,e}
 =
 -\frac{s(n-1)}{2}(1-(n+1)\delta),
\]
we have, by \eqref{eq:rest-shift-condition-general},
\[
 \left\langle F_i(x^*)+\eta x_i^*,q_i\right\rangle
 =
 \frac{n-1}{2}c(s)
 -\frac{s(n-1)}{2}\bigl(c'(s)+\eta\bigr)(1-(n+1)\delta)
 =0.
\]
Thus \(x_i^*\) satisfies the first-order condition for minimizing
\(\langle F_i(x^*),y_i\rangle+\eta\norm{y_i}^2/2\) over the line segment
\(K_i\).  Since the objective is strictly convex, \(x_i^*=\beta_i^\eta(F_i(x^*))\)
for every player \(i\), and therefore \(x^*\) is a regularized equilibrium of
\eqref{eq:fw-reg}.

It remains to prove instability.  The tangent space is
\(T_i=\spanop\{q_i\}\) for each player.  

Let \(h=(h_1,\ldots,h_n)\in T\). Since each tangent space \(T_i\) is one-dimensional, we may write
\[
        h_i=\theta_i q_i ,
        \qquad i=1,\ldots,n .
\]
In these coordinates, the restriction \(D F(x^*)|_T\) is represented by the matrix
\(B=(B_{ij})_{i,j=1}^n\) defined by
\[
\bigl(P_TDF(x^*)h\bigr)_i
=
\left(\sum_{j=1}^n B_{ij}\theta_j\right)q_i .
\]
Equivalently, \(B_{ij}\) is the scalar coefficient with which a perturbation of
player \(j\) in direction \(q_j\) contributes to the \(i\)-th tangent direction
\(q_i\).
We now verify that \(\mathbf 1=(1,\ldots,1)^\top\) is an eigenvector of \(B\).
Indeed, by substituting the Jacobian formula from \((1)\), the explicit
equilibrium \(x^*\), and the tangent directions \(q_i\), we obtain for every
\(i=1,\ldots,n\)
\[
        \sum_{j=1}^n B_{ij}
        =
        \alpha\,
        \frac{
        2(n+1) f'(s)
        -
        s(n-1) f''(s)
        +
        s(n^2-1)\delta f''(s)
        }{n+3}.
\]
The right-hand side is independent of \(i\). Hence all row sums of \(B\) are
equal, and therefore
\[
        B\mathbf 1
        =
        \mu \mathbf 1,
\]
where
\[
        \mu
        =
        \alpha M_\delta
        =
        \alpha\,
        \frac{
        2(n+1) f'(s)
        -
        s(n-1) f''(s)
        +
        s(n^2-1)\delta f''(s)
        }{n+3}.
\]
Thus \(\mathbf 1\) is an eigenvector of the restricted Jacobian
\(D F(x^*)|_T\) with eigenvalue \(\mu=\alpha M_\delta\).

By construction,
\(\mu< -\eta\).  Corollary~\ref{cor:fw-spectral-criterion} therefore implies
that \(x^*\) is linearly unstable.  Hence the constructed game has a regularized equilibrium that is not
Lyapunov stable. This contradicts the assumption that \(\mathcal C\) is
universally locally FW-stable.

It remains to prove \ref{enum:uls}.$\Rightarrow$\ref{enum:mon}., that is, necessity for universal global FW-stability. Suppose
instead that \(\mathcal C\) is universally globally FW-stable and
\(\mathcal C\not\subseteq Q_n(\mathbb R_+)\). We use the same construction
as above, choosing \(\eta\) outside a finite exceptional set so that the
unstable regularized equilibrium \(x^*\) is isolated.

To this end, let
\[
A:=DF(x^*)|_T .
\]
The matrix $A$ is finite-dimensional. We may choose the regularization
parameter $\eta$ above outside the finite set
\[
\mathcal E(A)
:=
\{-\lambda:\lambda\in\operatorname{spec}(A)\cap(-\infty,0)\}.
\]
Indeed, this is possible because $(0,-\mu)$ is a nonempty interval and
$\mathcal E(A)$ contains only finitely many points. With this choice,
\[
DG(x^*)|_T=-I-\frac1\eta A
\]
has no zero eigenvalue and still has an eigenvalue with positive real
part.
Consequently, $DG(x^*)|_T$ is nonsingular. Since $x^*\in\operatorname{relint}(K)$,
the inverse function theorem applied on the tangent space $T$ implies
that $x^*$ is an \emph{isolated rest point}: there exists a relative
neighbourhood $U$ of $x^*$ such that $x^*$ is the only rest point in $U$.
Because $x^*$ is both isolated and Lyapunov unstable, the rest-point set
cannot be Lyapunov stable. Indeed, choose $r>0$ such that
\[
\mathcal R_\eta\cap B_r(x^*)=\{x^*\},
\]
where $\mathcal R_\eta$ denotes the rest-point set of the regularized FW
dynamics. Since $x^*$ is Lyapunov unstable, there are initial conditions
arbitrarily close to $x^*$ whose trajectories leave $B_{r/2}(x^*)$.
At the first exit time, the trajectory has distance at least $r/2$ from
the whole rest-point set $\mathcal R_\eta$. Hence $\mathcal R_\eta$ is
not Lyapunov stable and therefore cannot be globally asymptotically stable.
Thus, if a PAT-closed class $\mathcal C$ has either the universal local
or the universal global FW-stability property, then necessarily
$\mathcal C\subseteq Q_n(\mathbb R_+)$. This proves the characterizations of the theorem.

As $Q_n(\Rp)$ is itself PAT-closed, we also get that it is the unique largest class
with the respective universal FW-stability property.\footnote{
If \(f\in Q_n(\mathbb R_+)\) and \(c=\alpha f+\beta\) with
\(\alpha>0\), then
$\Delta_c(x)=\alpha\Delta_f(x),\;
\Theta_c(x)=\alpha\Theta_f(x),$
so \(c\in Q_n(\mathbb R_+)\). Hence \(Q_n(\mathbb R_+)\) is PAT-closed.}
\end{proof}
We now derive an explicit instability result for
games with $3$ players and polynomial costs of maximum degree $6$.
 For this case, the maximum degree cannot be further
reduced within the class of polynomials with nonnegative
coefficients: when \(n=3\), the curvature bound is
\(x f''(x)\le 4 f'(x)\), and every such polynomial of degree at most five
satisfies this bound term by term, see Remark~\ref{ref:rem-costs}.

\begin{proof}[Proof of Corollary~\ref{cor:degree-six-counterexample}]
Let \(N=E=\{1,2,3\}\), set
\[
 \delta:=\frac1{100},
 \qquad
 \eta:=\frac3{100},
 \qquad
 \kappa:=\frac{3083}{3750},
\]
and define the candidate regularized equilibrium \(x^*\) and the line directions \(q_i\) by
\[
 x^*=
 \begin{pmatrix}
  49/50 & 1/100 & 1/100\\
  1/100 & 49/50 & 1/100\\
  1/100 & 1/100 & 49/50
 \end{pmatrix},
 \qquad
 Q=
 \begin{pmatrix}
  -1 & 1 & 1\\
   1 &-1 & 1\\
   1 & 1 &-1
 \end{pmatrix},
\]
where \(q_i\) is the \(i\)-th row of \(Q\).  The resource loads at \(x^*\) are
all equal to one.  Let \(\varepsilon=1/200\) and define
\[
 K_i:=\{x_i^*+\theta_i q_i:\ -\varepsilon\le\theta_i\le\varepsilon\}.
\]
Since every entry of \(x^*\) is at least \(1/100\), these line segments are
contained in \(\R_+^3\), and \(x_i^*\in\relint(K_i)\).

For every resource let
\[
 c_e(t)=\kappa+\frac16 t^6.
\]
Then \(c_e(1)=\kappa+1/6\), \(c'_e(1)=1\), and \(c''_e(1)=5\).  Since row
\(q_i\) has entry \(-1\) in coordinate \(i\) and entry \(+1\) in the two other
coordinates, we have, for every player \(i\),
\begin{align*}
 \left\langle F_i(x^*)+\eta x_i^*,q_i\right\rangle
 &=
 \sum_{e=1}^3 q_{i,e}
 \left(\kappa+\frac16+(1+\eta)x_{i,e}^*\right)\\
 &=
 \kappa+\frac16-(1+\eta)(1-4\delta)
 =0,
\end{align*}
because
\[
 \kappa=(1+\eta)(1-4\delta)-\frac16
 =\frac{3083}{3750}.
\]
Thus \(x_i^*\) satisfies the first-order condition for minimizing
\(\langle F_i(x^*),y_i\rangle+\eta\norm{y_i}^2/2\) over \(K_i\).  The objective
is strictly convex on \(K_i\), so \(x_i^*=\beta_i^\eta(F_i(x^*))\) for all
\(i\).  Hence \(x^*\) is a regularized equilibrium of \eqref{eq:fw-reg}.

It remains to check the linearization.  Since \(T_i=\spanop\{q_i\}\), write a
tangent vector as \(h_i=\theta_iq_i\).  In the coordinates
\(\theta=(\theta_1,\theta_2,\theta_3)\), the operator \(DF(x^*)|_T\) is
represented by the matrix \(B\) with entries
\begin{equation}\label{eq:coordinate-jacobian-degree-six-certificate}
 B_{ij}
 =
 \frac1{\norm{q_i}^2}
 \sum_{e=1}^3
 q_{i,e}
 \left(
 \delta_{ij}q_{i,e}+
 \bigl(1+5x^*_{i,e}\bigr)q_{j,e}
 \right).
\end{equation}
Substitution gives
\[
B=
\begin{pmatrix}
 11/3 & -59/30 & -59/30\\
 -59/30 & 11/3 & -59/30\\
 -59/30 & -59/30 & 11/3
\end{pmatrix}.
\]
Therefore the spectrum of $B$ is given by
\[
 \spec(B)=\left\{-\frac4{15},\frac{169}{30},\frac{169}{30}\right\}.
\]
In particular, \(B\) has the eigenvalue
\(\mu=-4/15<-3/100=-\eta\).  By
Corollary~\ref{cor:fw-spectral-criterion}, the linearized
Euclidean-regularized FW dynamics has the corresponding eigenvalue
\[
 -1-\frac{\mu}{\eta}
 =
 -1+\frac{4/15}{3/100}
 =\frac{71}{9}>0.
\]
Thus \(x^*\) is linearly, and hence locally, unstable.
\end{proof}

\begin{remark}[Network realization of the degree-six example]
The degree-six instability example can be realized as a genuine network
routing game. Consider the undirected triangle with edges
\[
e_1=\{v_2,v_3\},\qquad
e_2=\{v_1,v_3\},\qquad
e_3=\{v_1,v_2\}.
\]
Player \(i\) routes demand \(d_i=99/100\) between the endpoints of edge
\(e_i\). Hence player \(i\) has two paths: the direct path \(P_i^{\rm dir}=\{e_i\}\)
and the alternative path \(P_i^{\rm alt}=E\setminus\{e_i\}\). If \(y_i\) is
the amount routed on the alternative path, then
\[
x_{i,e_i}=d_i-y_i,\qquad x_{i,e}=y_i \quad(e\neq i).
\]
At \(y_i^*=1/100\) this gives
\[
x^*=
\begin{pmatrix}
49/50 & 1/100 & 1/100\\
1/100 & 49/50 & 1/100\\
1/100 & 1/100 & 49/50
\end{pmatrix},
\]
and the tangent directions are
\[
Q=
\begin{pmatrix}
-1 & 1 & 1\\
1 & -1 & 1\\
1 & 1 & -1
\end{pmatrix}.
\]
Thus the line-segment construction used in Corollary~\ref{cor:degree-six-counterexample}
is precisely the local path-flow parametrization of this triangle network.
With costs \(c_e(t)=3083/3750+t^6/6\) and \(\eta=3/100\), the same
calculation gives the projected Jacobian eigenvalue \(-4/15\), and hence
the Euclidean-regularized FW dynamics has the positive linearization
eigenvalue \(71/9\). Therefore the unstable regularized equilibrium occurs
already in a three-player network routing game.
\end{remark}
\begin{figure}[t]
\centering
\begin{tikzpicture}[scale=1.3]

% vertices
\node[circle,draw,fill=white] (v1) at (90:2) {$v_1$};
\node[circle,draw,fill=white] (v2) at (210:2) {$v_2$};
\node[circle,draw,fill=white] (v3) at (330:2) {$v_3$};

% edges
\draw[line width=0.9pt] (v2) -- node[below left] {$e_1$} (v3);
\draw[line width=0.9pt] (v1) -- node[above right] {$e_2$} (v3);
\draw[line width=0.9pt] (v1) -- node[above left] {$e_3$} (v2);

\end{tikzpicture}
\caption{Triangle network for the three-player instability example.
Player \(i\) routes demand between the endpoints of edge \(e_i\). The direct
path consists of the single edge \(e_i\), while the alternative path is the
remaining two-edge path \(E\setminus\{e_i\}\).}
\label{fig:triangle-network}
\end{figure}

\iffalse
\begin{remark}
The example uses only three resources and three players. For this case, the maximum degree cannot be
reduced below six within the class of polynomials with nonnegative
coefficients: when \(n=3\), the curvature bound is
\(x f''(x)\le 4 f'(x)\), and every such polynomial of degree at most five
satisfies this bound term by term.  Hence degree six is the first degree at
which a three-player counterexample of this kind can occur.
\end{remark}
\fi
\section{Local Stability of Regularized FW for Games on Simplices}
\label{sec:parallel-stability}

This section proves Theorem~\ref{thm:parallel}, the third main result stated in
the results section.  The point of the theorem is not that the VI operator is monotone on
simplices (the contrary is actually true, see Remark~\ref{rem:parallel}).  Rather, the projected Jacobian that governs the local
regularized FW dynamics is positive stable on the tangent space of the product
of simplices and   by Corollary~\ref{cor:fw-spectral-criterion}, we get local exponential stability.
For a game on simplices,  each player's feasible set is given by a simplex of type:
\[
 K_i=\left\{x_i\in\R_+^E: \sum_{e\in E} x_{i,e}=d_i\right\},
 \qquad d_i>0.\footnote{Note that this strategy space allows the interpretation of a network routing game on a parallel-edge graph with common source $s$
and common sink $t$ connected by parallel edges.}
\]
Thus, at an interior point, the tangent space of player $i$ is
\[
 T_i=
 \left\{
 h_i\in\R^E: \sum_{e\in E} h_{i,e}=0
 \right\},
 \qquad
 T=T_1\times\cdots\times T_n.
\]
Let $P_i$ be the orthogonal projection onto $T_i$.  Recall the general notation
from Section~\ref{sec:vi-operator}, where the orthogonal projection onto
$T=T_1\times\cdots\times T_n$ is denoted as
$ P_T=\blkdiag(P_1,\ldots,P_n).$

\begin{proof}[Proof of Theorem~\ref{thm:parallel}]
Let $x^*\in\relint(K)$ be a regularized equilibrium of the Euclidean-regularized FW dynamics
\eqref{eq:fw-reg}.  By Lemma~\ref{lem:derivative-beta}, the linearization
on the tangent space is
\[
 DG(x^*)|_T
 =
 -I-\frac1\eta DF(x^*)|_T.
\]
Hence it is enough to prove that every eigenvalue of the projected Jacobian
$A:=DF(x^*)|_T:T\to T$
has strictly positive real part. For every  $e\in E$, define
\[
 a_e:=c'_e(\load{x^*}_{e})>0,
 \qquad
 b_e:=c''_e(\load{x^*}_{e}),
 \qquad
 \ell_e:=\load{x^*}_{e}.
\]
We use the expression in~\eqref{eq:DF-action-prelim}.  For a tangent direction
$h\in T$, set $\bar h_e:= \sum_{i\in N} h_{i,e}.$
Then
\begin{equation}\label{eq:parallel-DF-action}
 (DF(x^*)h)_{i,e}
 =
 a_e h_{i,e}+\bigl(a_e+b_e x^*_{i,e}\bigr)\bar h_e.
\end{equation}
We now allow complex eigenvectors, replacing $T$ by its complexification.  Let
$h\neq0$ and $\mu\in\mathbb C$ satisfy
\[
 P_TDF(x^*)h=\mu h.
\]
Since $P_i$ is the projection onto  $T_i$, the above
eigenvalue equation is equivalent to the existence of constants
$\lambda_i\in\mathbb C$, one for each player, such that
\begin{equation}\label{eq:parallel-eigen-equations}
 (DF(x^*)h)_{i,e}-\mu h_{i,e}=\lambda_i
 \qquad
 \forall i\in N,
 \forall e\in E.
\end{equation}
Indeed, the difference between $DF_i(x^*)h$ and $\mu h_i$ must lie in
$T_i^\perp=\spanop\{\ones\}$.  Substituting \eqref{eq:parallel-DF-action} into
\eqref{eq:parallel-eigen-equations} gives
\begin{equation}\label{eq:parallel-basic-equation}
 (a_e-\mu)h_{i,e}
 +
 \bigl(a_e+b_e x^*_{i,e}\bigr)\bar h_e
 =
 \lambda_i.
\end{equation}
Let $\Lambda:= \sum_{i\in N}\lambda_i.$
Summing \eqref{eq:parallel-basic-equation} over all players yields
\begin{equation}\label{eq:parallel-summed-equation}
 (D_e-\mu)\bar h_e=\Lambda,
 \qquad
 D_e:=(n+1)a_e+b_e\ell_e=(n-1)a_e+\bigl(2a_e+b_e\ell_e\bigr)>0.
\end{equation}
Moreover, since $h\in T$,
\begin{equation}\label{eq:parallel-total-load-zero}
 \sum_{e\in E} \bar h_e
 =
  \sum_{i\in N} \sum_{e\in E} h_{i,e}
 =0.
\end{equation}
We distinguish two cases.

\medskip

\noindent
\textbf{Case 1: $\Lambda\neq0$.}
Then $\mu\neq D_e$ for every $e\in E$, and \eqref{eq:parallel-summed-equation}
gives $\bar h_e=\frac{\Lambda}{D_e-\mu}.$
Using \eqref{eq:parallel-total-load-zero}, we obtain
\begin{equation}\label{eq:parallel-rational-D}
 \sum_{e\in E}\frac1{D_e-\mu}=0.
\end{equation}
Write $\mu=u+iv$.  If $v\neq0$, then
$ \operatorname{Im}\left(\frac1{D_e-\mu}\right)
 =
 \frac{v}{(D_e-u)^2+v^2},$
and therefore the imaginary part of the left-hand side of
\eqref{eq:parallel-rational-D} is
\[
 v \sum_{e\in E}\frac1{(D_e-u)^2+v^2}\neq0,
\]
a contradiction.  Hence $v=0$, and $\mu$ is real.  For real $\mu\le0$, every
term in \eqref{eq:parallel-rational-D} is strictly positive because
$D_e>0$.  Thus no nonpositive real number can solve
\eqref{eq:parallel-rational-D}.  Hence $\mu>0$.

\medskip

\noindent
\textbf{Case 2: $\Lambda=0$.}
Equation \eqref{eq:parallel-summed-equation} becomes
$ (D_e-\mu)\bar h_e=0$ for all $e\in E.$
If $\mu=D_e$ for some  $e\in E$, then immediately $\mu>0$.  We may therefore
assume that $\mu\neq D_e$ for all $e\in E$, in which case $\bar h_e=0$ for all
$e\in E$.  Equation \eqref{eq:parallel-basic-equation} then reduces to
\begin{equation}\label{eq:parallel-basic-zero-load}
 (a_e-\mu)h_{i,e}=\lambda_i
 \qquad
 \forall i\in N,e\in E.
\end{equation}
If $\mu=a_e$ for some  $e\in E$, then again $\mu>0$.  Otherwise, suppose first
that $\lambda_i\neq0$ for at least one player $i$.  For this player,
\eqref{eq:parallel-basic-zero-load} gives
$h_{i,e}=\lambda_i/(a_e-\mu)$.  The tangent condition
$\sum_e h_{i,e}=0$ implies
\begin{equation}\label{eq:parallel-rational-a}
 \sum_{e\in E}\frac1{a_e-\mu}=0.
\end{equation}
The same imaginary-part argument as in Case~1 shows that any solution of
\eqref{eq:parallel-rational-a} is real, and since all $a_e>0$, no
nonpositive real number can solve it.  Thus $\mu>0$.

It remains only to consider the possibility that $\lambda_i=0$ for every
player.  Since $h\neq0$, \eqref{eq:parallel-basic-zero-load} then implies that
$a_e-\mu=0$ for at least one $e\in E$, so $\mu=a_e>0$.

We have shown that every eigenvalue $\mu$ of $A=DF(x^*)|_T$ is in fact real
and strictly positive.  Consequently, every eigenvalue of
\[
 DG(x^*)|_T=-I-\frac1\eta A
\]
is of the form $-1-\mu/\eta$ with $\mu>0$, and is therefore strictly negative.
The proof follows now from Corollary~\ref{cor:fw-spectral-criterion}.
\end{proof}

\begin{remark}
The proof establishes positive stability of the projected Jacobian
$DF(x^*)|_T$.  It does not require monotonicity of the VI operator $F$ on the
whole feasible set, nor even positive semidefiniteness of the symmetric part of
$DF(x^*)$ on all feasible secant directions.  This explains why the
regularized VI best-response dynamics can be locally stable on simplices
outside the global VI-monotonicity regime characterized in
Theorem~\ref{thm:classification}.
\end{remark}

\iffalse
\paragraph{Use of AI tools.}
During the preparation of this manuscript, the author used OpenAI's
ChatGPT as an interactive assistant for notation
checking, LaTeX editing, and consistency
checks of selected arguments. All mathematical statements, proofs,
citations, and the final text was independently developed  and verified by the
author, who takes full responsibility for the content of the paper.
\fi
 
 \bibliographystyle{abbrv}
\bibliography{master-bib.bib}
\input{appendix}

\end{document}

%% file: appendix.tex
\section*{Appendix}\label{sec:appendix-Q}

\subsection{Proof of Lemma~\ref{lem:resource-strictness}}\label{proof-strictness}
\begin{proof}[Proof of Lemma~\ref{lem:resource-strictness}]
Put
\[
u:=z^1-z^0,\qquad z(t):=z^0+tu,\qquad s(t):=\mathbf 1^\top z(t),
\qquad \sigma:=\mathbf 1^\top u .
\]
By the fundamental theorem of calculus,
\[
\langle F_e(z^1)-F_e(z^0), u\rangle
=
\int_0^1 u^\top D F_e(z(t))u\,dt .
\]
By the resource-block estimate \eqref{eq:block-lower-bound}, the integrand is
nonnegative for all $t\in[0,1]$. Hence it remains to show that it cannot
vanish identically unless $u=0$.

We first record a simple consequence of
$f\in \mathcal D(\mathbb R_+)\cap \mathcal D_{\mathrm{mc-si}}(\mathbb R_+)$.
Let
\[
\phi(s):=f(s)+sf'(s).
\]
Then $\phi'(s)=2f'(s)+sf''(s)=\Theta_f(s)\ge0$, and $\phi$ is strictly
increasing on every non-degenerate interval. Moreover,
\[
f'(s)>0\qquad\text{for every }s>0.
\]
Indeed, if $f'(s_0)=0$ for some $s_0>0$, then
\[
\frac{d}{ds}\bigl(s^2f'(s)\bigr)
=
s\Theta_f(s)\ge0.
\]
Since $s^2f'(s)\ge0$ and $s_0^2f'(s_0)=0$, we would have
$f'(s)=0$ for all $s\in[0,s_0]$, and hence $\phi$ would be constant on
$[0,s_0]$, contradicting $f\in \mathcal D_{\mathrm{mc-si}}(\mathbb R_+)$.

Suppose, toward a contradiction, that
\[
u^\top D F_e(z(t))u=0
\qquad\text{for all }t\in[0,1].
\]
First consider the case $\sigma=0$. Then $s(t)$ is constant. If this
constant is zero, then $z(t)=0$ for all $t$, because $z(t)\in\mathbb R_+^n$,
and hence $u=0$, contrary to $z^0\ne z^1$. If the constant load is positive,
then
\[
u^\top D F_e(z(t))u
=
f'(s(t))\|u\|^2>0,
\]
because $f'(s(t))>0$ and $u\ne0$. This is again a contradiction.

It remains to consider $\sigma\ne0$. If $\sigma\neq0$, then
\[
s(t)=\mathbf 1^\top z(t)
=
\mathbf 1^\top z^0+t\sigma
\]
is affine and non-constant in $t$. Hence its image is the
non-degenerate interval
\[
\bigl[
\min\{\mathbf 1^\top z^0,\mathbf 1^\top z^1\},
\max\{\mathbf 1^\top z^0,\mathbf 1^\top z^1\}
\bigr].
\]
Since $\phi$ is strictly increasing on every
non-degenerate interval and $\phi'=\Theta_f$ is continuous and nonnegative,
there exists a nonempty open interval $J\subseteq(0,1)$ such that
\[
\Theta_f(s(t))>0
\qquad\text{for all }t\in J.
\]
For $t\in J$, if $f''(s(t))<0$, then the second case of
\eqref{eq:block-lower-bound} gives
\[
u^\top D F_e(z(t))u
\ge
\frac{\Theta_f(s(t))}{2}
\left(\|u\|^2+(\mathbf 1^\top u)^2\right)>0,
\]
a contradiction. Hence, for all $t\in J$ at which the integrand vanishes,
we must have $f''(s(t))\ge0$. The first case of
\eqref{eq:block-lower-bound} then implies that equality can hold only if
\[
\Delta_f(s(t))=0
\]
and equality holds in the lower estimate of Lemma~\ref{ineq:main}.

By the equality characterization in the proof of Lemma~\ref{ineq:main},
equality in the lower estimate, with $s(t)>0$ and $\sigma\ne0$, forces
$z(t)$ to be supported on a single coordinate. More precisely, after possibly
renumbering the players, it forces
\[
z(t)=s(t)e_k,
\qquad
u_k=-\sigma,
\qquad
u_j=\frac{2\sigma}{n-1}\quad (j\ne k).
\]
Since there are only finitely many possible indices $k$, there is a
nonempty open subinterval $J'\subseteq J$ on which the same index $k$ is
selected. But then $z_j(t)=0$ for all $j\ne k$ and all $t\in J'$, which
implies $u_j=0$ for all $j\ne k$. This contradicts
\[
u_j=\frac{2\sigma}{n-1}\ne0
\qquad (j\ne k).
\]
Therefore the integrand cannot vanish identically. Since it is continuous
and nonnegative, its integral is strictly positive. This proves the claim.
\end{proof}

\subsection{Stability of Learning Dynamics}\label{sec:apx-stability}

\begin{definition}[Local stability notions]
Let $x^*$ be an equilibrium of the dynamical system
\[
\dot x = G(x)
\]
on a feasible set $K\subset \R^d$.

\begin{enumerate}
\item
The equilibrium $x^*$ is \emph{(Lyapunov) stable} if, for every
neighborhood $U$ of $x^*$ (relative to $K$), there exists a neighborhood
$V\subseteq U$ of $x^*$ such that every solution with initial condition
$x(0)\in V$ satisfies
\[
x(t)\in U
\qquad
\text{for all }t\ge0.
\]

\item
The equilibrium $x^*$ is \emph{locally asymptotically stable} if it is
stable and there exists a neighborhood $W$ of $x^*$ such that
\[
\lim_{t\to\infty}x(t)=x^*
\]
for every solution with initial condition $x(0)\in W$.

\item
The equilibrium $x^*$ is \emph{locally exponentially stable} if there
exist a neighborhood $W$ of $x^*$ and constants $M,\alpha>0$ such that
every solution with initial condition $x(0)\in W$ satisfies
\[
\|x(t)-x^*\|
\le
M e^{-\alpha t}\|x(0)-x^*\|,
\qquad
t\ge0.
\]
\end{enumerate}
\end{definition}
Note that \text{local exponential stability}
$\;\Rightarrow\;$
\text{local asymptotic stability}
$\;\Rightarrow\;$
\text{Lyapunov stability}.
\iffalse
\begin{definition}[Local exponential stability]
An equilibrium $x^*\in K\subset \R^d$ of a dynamical system
\[
\dot x=G(x)
\]
is called \emph{locally exponentially stable} if there exist a
neighbourhood $U$ of $x^*$ (relative to K) and constants $M,\alpha>0$ such that every
solution with initial condition $x(0)\in U$ satisfies
\[
\|x(t)-x^*\|
\le
M e^{-\alpha t}\|x(0)-x^*\|,
\qquad t\ge0.
\]
\end{definition}
\fi
\begin{definition}[Global asymptotic stability of an equilibrium set]
Let $X^*\subseteq K\subset \R^d$ be the set of equilibria of a dynamical system on
$K$. We say that $X^*$ is \emph{globally asymptotically stable} if the
following two properties hold.
First, $X^*$ is Lyapunov stable: for every $\varepsilon>0$ there exists
$\delta>0$ such that every solution with
\[
\operatorname{dist}(x(0),X^*)\le \delta
\]
satisfies
\[
\operatorname{dist}(x(t),X^*)\le \varepsilon
\qquad
\text{for all }t\ge0 .
\]
Second, $X^*$ is globally attractive: for every initial condition
$x(0)\in K$, the corresponding solution satisfies
\[
\lim_{t\to\infty}\operatorname{dist}(x(t),X^*)=0 .
\]
Here
\[
\operatorname{dist}(x,X^*):=\inf_{y\in X^*}\|x-y\|.
\]
\end{definition}

\subsection{Proof of Global Asymptotic Stability in Theorem~\ref{thm:fw-characterization}}\label{apx:global-conv}
\begin{proposition}[Global convergence of Euclidean-regularized FW]
\label{prop:RFW-apx}
Let $K\subseteq \mathbb R^d$ be nonempty, compact, and convex, and let
$F:K\to\mathbb R^d$ be continuously differentiable and monotone, i.e.,
\[
\langle F(x)-F(y),x-y\rangle\ge0
\qquad \forall x,y\in K.
\]
For $\eta>0$, consider the Euclidean-regularized FW dynamics
\[
\dot x=\beta^\eta(F(x))-x,
\]
where
\[
\beta^\eta(z)
=
\arg\min_{y\in K}
\left\{
\langle z,y\rangle+\frac{\eta}{2}\|y\|^2
\right\}.
\]
Then the dynamics admit a unique rest point $x^\eta$, and $x^\eta$ is
globally asymptotically stable.
\end{proposition}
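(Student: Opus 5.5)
We plan to prove Proposition~\ref{prop:RFW-apx} in three steps: establish existence and uniqueness of the rest point, build a Lyapunov function from a Fenchel-type coupling, and then conclude Lyapunov stability and global attractivity.

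\textbf{Rest point.} Rest points of $\dot x=\beta^\eta(F(x))-x$ are exactly the solutions of $VI(K,F+\eta\,\id)$. This holds because $x=\beta^\eta(F(x))$ is, by the first-order conditions of the strongly convex problem defining $\beta^\eta$, equivalent to
\[
\langle F(x)+\eta x,y-x\rangle\ge0 \qquad \text{for all } y\in K.
\]
Since $F$ is monotone, $F+\eta\,\id$ is strongly monotone with modulus $\eta$ and continuous on the compact convex set $K$. Standard VI theory then gives a unique solution $x^\eta$; existence follows, for example, from Brouwer applied to the continuous map $\beta^\eta\circ F$ on $K$. We also note that $\beta^\eta(z)=P_K(-z/\eta)$ is $1/\eta$-Lipschitz, so the vector field is $C^0$ and locally Lipschitz ($F$ is $C^1$). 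Moreover, $K$ is forward invariant, because $\dot x$ points toward a point of $K$. Hence solutions exist, are unique, and stay in $K$ for all $t\ge0$.

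\textbf{Lyapunov function.} Following Hadikhanloo et al., we use the Fenchel coupling for the Euclidean regularizer $h(y)=\frac{\eta}{2}\|y\|^2+\iota_K(y)$. Its conjugate is
\[
h^*(w)=\max_{y\in K}\Bigl\{\langle w,y\rangle-\tfrac{\eta}{2}\|y\|^2\Bigr\},
\]
which is $C^1$ with $\nabla h^*(w)=\beta^\eta(-w)$. Write $w(x):=-F(x)$ and define
\[
V(x):=h^*(w(x))+h(x^\eta)-\langle w(x),x^\eta\rangle\ge0.
\]
By Fenchel--Young, $V(x)=0$ if and only if $\beta^\eta(F(x))=x^\eta$; strong convexity of $h$ also gives the quantitative bound $V(x)\ge\frac{\eta}{2}\|\beta^\eta(F(x))-x^\eta\|^2$.

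\textbf{Main obstacle.} The hard part is differentiating $V$ along trajectories, since $V$ depends on $x$ only through $F(x)$. The chain rule gives
\[
\dot V=\bigl\langle \nabla h^*(w)-x^\eta,\,-DF(x)\dot x\bigr\rangle
=-\bigl\langle \beta^\eta(F(x))-x^\eta,\,DF(x)(\beta^\eta(F(x))-x)\bigr\rangle .
\]
Because the left factor is not $\dot x$, monotonicity of $DF$ does not apply directly, and I expect this mismatch to be the main difficulty.

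My first attempt to close it replaces the Fenchel coupling with the alternative candidate
\[
W(x)=\langle F(x)+\eta\,\beta,\,x-\beta\rangle-\tfrac{\eta}{2}\|x-\beta\|^2, \qquad \beta:=\beta^\eta(F(x)),
\]
that is, the regularized gap function $\max_{y\in K}\{\langle F(x),x-y\rangle+\frac{\eta}{2}\|x\|^2-\frac{\eta}{2}\|y\|^2\}$, in the spirit of Fukushima's gap. Danskin's theorem gives
\[
\dot W=\langle DF(x)\dot x,\,x-\beta\rangle+\langle F(x)+\eta x,\dot x\rangle
=-\langle DF(x)\dot x,\dot x\rangle-\langle F(x)+\eta x,\,x-\beta\rangle .
\]
The first term is $\le0$ by monotonicity. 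For the second term, the optimality of $\beta$, tested at $y=x$, gives $\langle F(x)+\eta\beta,x-\beta\rangle\ge0$, and hence
\[
\langle F(x)+\eta x,\,x-\beta\rangle\ge\eta\|x-\beta\|^2 .
\]
Therefore $\dot W\le-\eta\|\dot x\|^2$, with $W\ge0$ and $W=0$ exactly at the rest point. This is the approach I would commit to. If $DF$ is only defined on $K$, the derivative computation is still valid along trajectories, because they remain in $K$.

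\textbf{Conclusion.} Since $W$ is continuous, nonnegative, vanishes only at $x^\eta$, and is nonincreasing along trajectories, the sublevel sets of $W$ form a neighbourhood base of $x^\eta$ in the compact set $K$. This gives Lyapunov stability. LaSalle's invariance principle on the compact invariant set $K$ then shows that $\omega$-limit sets lie in $\{\dot W=0\}\subseteq\{\beta^\eta(F(x))=x\}=\{x^\eta\}$, which gives global attractivity. Together these yield global asymptotic stability.
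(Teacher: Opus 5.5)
Your committed argument is correct and is essentially the paper's proof. Your regularized gap $W(x)=\max_{y\in K}\{\langle F(x),x-y\rangle+\frac{\eta}{2}\|x\|^2-\frac{\eta}{2}\|y\|^2\}$ is exactly the paper's Fenchel coupling $V(x)=h(x)+h^*(-F(x))+\langle F(x),x\rangle$, and you then follow the same route: the bound $\dot W\le-\eta\|\dot x\|^2$ from testing optimality of $\beta$ at $y=x$ plus monotonicity, followed by positive definiteness and LaSalle. One small slip: your closed form should read $W(x)=\langle F(x)+\eta\beta,x-\beta\rangle+\frac{\eta}{2}\|x-\beta\|^2$ (plus, not minus), since $\frac{\eta}{2}(\|x\|^2-\|\beta\|^2)=\eta\langle\beta,x-\beta\rangle+\frac{\eta}{2}\|x-\beta\|^2$; with the minus sign $W$ can be negative (e.g.\ $K=[0,1]$, $F\equiv0$, $\eta=1$, $x=1$), but your max-form definition and everything you derive from it are unaffected.
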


\begin{proof}
A rest point $x^\eta$ satisfies
\[
x^\eta=\beta^\eta(F(x^\eta)).
\]
Equivalently, $x^\eta$ solves the variational inequality
\[
\langle F(x^\eta)+\eta x^\eta,y-x^\eta\rangle\ge0
\qquad \forall y\in K.
\]
Since $F$ is monotone, the operator $F+\eta\,\mathrm{id}$ is strongly
monotone with modulus $\eta$. Hence this VI has a unique solution. Thus
the regularized FW dynamics have a unique rest point.

It remains to prove global convergence. Define
\[
h(y):=\frac{\eta}{2}\|y\|^2+\iota_K(y),
\]
where $\iota_K$ is the indicator function of $K$, and let $h^*$ denote
the convex conjugate of $h$:
\[
h^*(v)
=
\max_{y\in K}
\left\{
\langle v,y\rangle-\frac{\eta}{2}\|y\|^2
\right\}.
\]
Since $h$ is $\eta$-strongly convex, $h^*$ is continuously differentiable
and
\[
\nabla h^*(v)
=
\arg\max_{y\in K}
\left\{
\langle v,y\rangle-\frac{\eta}{2}\|y\|^2
\right\}.
\]
Thus
\[
\nabla h^*(-F(x))=\beta^\eta(F(x)).
\]

Consider the Fenchel coupling
\[
V(x)
:=
h(x)+h^*(-F(x))+\langle F(x),x\rangle .
\]
By Fenchel's inequality, $V(x)\ge0$, and equality holds if and only if
\[
x=\nabla h^*(-F(x))=\beta^\eta(F(x)),
\]
i.e., if and only if $x$ is a rest point. Since the rest point is unique,
$V$ is positive definite with respect to $x^\eta$.

Let
\[
y:=\beta^\eta(F(x)),
\qquad
d:=y-x=\dot x .
\]
Along a solution of the dynamics, we have
\[
\frac{d}{dt}V(x(t))
=
\eta\langle x,d\rangle
-
d^\top DF(x)d
+
\langle F(x),d\rangle .
\]
The optimality condition for $y=\beta^\eta(F(x))$ gives
\[
\langle F(x)+\eta y,z-y\rangle\ge0
\qquad \forall z\in K.
\]
Choosing $z=x$ yields
\[
\langle F(x)+\eta y,x-y\rangle\ge0,
\]
or equivalently,
\[
\langle F(x),d\rangle\le -\eta\langle y,d\rangle .
\]
Therefore,
\[
\eta\langle x,d\rangle+\langle F(x),d\rangle
\le
\eta\langle x-y,d\rangle
=
-\eta\|d\|^2 .
\]
Moreover, monotonicity of $F$ implies
\[
d^\top DF(x)d\ge0 .
\]
Hence
\[
\dot V(x(t))
\le
-\eta\|\beta^\eta(F(x(t)))-x(t)\|^2
\le0.
\]

Thus $V$ is nonincreasing along trajectories. Since $K$ is compact and
the vector field is continuous, every solution remains in $K$ for all
$t\ge0$. Moreover,
\[
\int_0^\infty
\|\beta^\eta(F(x(t)))-x(t)\|^2\,dt
<\infty .
\]
By LaSalle's invariance principle, every solution converges to the largest
invariant subset of
\[
\{x\in K:\beta^\eta(F(x))=x\}.
\]
This set consists only of the unique rest point $x^\eta$. Hence
\[
\lim_{t\to\infty}x(t)=x^\eta
\]
for every initial condition $x(0)\in K$.

Finally, since $V$ is continuous, positive definite with respect to the
unique rest point $x^\eta$, and nonincreasing along solutions, the rest
point is Lyapunov stable. Therefore $x^\eta$ is globally asymptotically
stable.
\end{proof}

\subsection{Universally VI-Monotone Cost Functions}\label{sec:cost-classes}
%\section*{Examples of functions in the player-independent class $\Qge(\R_+)$}
We will concentrate on the player-independent classes of VI-monotone cost functions.
\[
\D(\R_+)
:=
\left\{
 f\in C^2(\R_+):
 f'(x)\ge 0
 \ \text{and}\
 2f'(x)+x f''(x)\ge 0
 \ \text{for all }x\in\R_+
\right\}.
\]
For $n\ge2$, let
\[
\Qle{n}(\R_+)
:=
\left\{
 f\in\D(\R_+):
 x f''(x)\le \gamma_n f'(x)
 \ \text{for all }x\in\R_+
\right\},
\qquad
\gamma_n:=2+\frac{4}{n-1}.
\]
The player-independent class is
\[
\Qge(\R_+)
:=
\bigcap_{n\ge2}\Qle{n}(\R_+)
=
\left\{
 f\in\D(\R_+):
 x f''(x)\le 2 f'(x)
 \ \text{for all }x\in\R_+
\right\}.
\]
Equivalently, at points where $f'(x)>0$,
\[
-2\le \frac{x f''(x)}{f'(x)}\le 2.
\]

\begin{longtable}{p{0.50\textwidth}p{0.40\textwidth}}
\caption{List of function classes contained in $\Qge(\R_+)$.}
\label{tab:Qge-examples-condensed}\\
\toprule
Function class & Parameter restrictions \\
\midrule
\endfirsthead
\toprule
Function class & Parameter restrictions \\
\midrule
\endhead
\bottomrule
\endfoot
Polynomials of degree at most three
\[
 f(x)=a_0+a_1x+a_2x^2+a_3x^3
\]
&
$a_0\in\R$, $a_1,a_2,a_3\ge0$.
\\
Shifted power functions, including pure powers
\[
 f(x)=a+b(x+\tau)^p
\]
&
$a\in\R$, $b\ge0$, $\tau\ge0$; if $\tau>0$, then $0<p\le3$; if $\tau=0$, then $p=1$ or $2\le p\le3$.
\\
Logarithmic powers
\[
 f(x)=a+b\bigl(A+\log(1+x)\bigr)^\alpha
\]
&
$a\in\R$, $b\ge0$, $A>0$, $0<\alpha\le3$.
\\
Logarithmic power growth
\[
 f(x)=a+b\log(1+x^p)
\]
&
$a\in\R$, $b\ge0$, and either $p=1$ or $2\le p\le3$.
\\
Saturating rational-power functions
\[
 f(x)=a+b\Bigl(1-(1+\lambda x)^{-p}\Bigr)
\]
&
$a\in\R$, $b\ge0$, $\lambda>0$, $0<p\le1$.
\\
Inverse trigonometric and hyperbolic functions
\[
\begin{gathered}
 f(x)=a+b\arctan(\lambda x),\\
 f(x)=a+b\operatorname{arsinh}(\lambda x)
\end{gathered}
\]
&
$a\in\R$, $b\ge0$, $\lambda>0$.
\\
Smooth square-root functions
\[
 f(x)=a+b\bigl(\sqrt{A+x^2}-\sqrt A\bigr)
\]
&
$a\in\R$, $b\ge0$, $A>0$.
\end{longtable}

\subsubsection{Proofs of Membership}
We prove membership in $\Qge(\R_+)$ by checking
\[
 f'(x)\ge0,\qquad
 2f'(x)+xf''(x)\ge0,
 \qquad
 xf''(x)\le2f'(x).
\]
Equivalently, whenever $f'(x)>0$, it suffices to show
\[
-2\le R_f(x):=\frac{xf''(x)}{f'(x)}\le2.
\]
Constant functions are covered trivially, so below we omit the cases where the multiplicative parameter $b$ is zero.
\paragraph{Polynomials of degree at most three.}
Let
\[
 f(x)=a_0+a_1x+a_2x^2+a_3x^3,
 \qquad a_1,a_2,a_3\ge0.
\]
Then
\[
 f'(x)=a_1+2a_2x+3a_3x^2\ge0,
\]
and
\[
2f'(x)+xf''(x)
=
2a_1+6a_2x+12a_3x^2\ge0.
\]
Moreover,
\[
2f'(x)-xf''(x)
=2a_1+2a_2x\ge0.
\]
Hence $f\in\Qge(\R_+)$.

\paragraph{Shifted power functions.}
Let
\[
 f(x)=a+b(x+\tau)^p,
 \qquad b>0.
\]
First suppose $\tau>0$ and $0<p\le3$. Then
\[
 f'(x)=bp(x+\tau)^{p-1}>0,
 \qquad
 R_f(x)=(p-1)\frac{x}{x+\tau}.
\]
Since $0\le x/(x+\tau)\le1$ and $0<p\le3$, we have
\[
 -1<R_f(x)\le2.
\]
Thus $f\in\Qge(\R_+)$.

If $\tau=0$, then $f(x)=a+bx^p$. The stated restrictions $p=1$ or
$2\le p\le3$ ensure $C^2$-regularity at $x=0$. For $p=1$ the function is affine. For $2\le p\le3$,
\[
 R_f(x)=p-1\in[1,2]
 \qquad (x>0),
\]
and the inequalities at $x=0$ follow by continuity. Hence the pure powers in the table also belong to $\Qge(\R_+)$.

\paragraph{Logarithmic powers.}
Let
\[
 f(x)=a+b\bigl(A+\log(1+x)\bigr)^\alpha,
 \qquad b>0,
\]
with $A>0$ and $0<\alpha\le3$. Put
\[
 L:=A+\log(1+x).
\]
Then
\[
 f'(x)=b\alpha\frac{L^{\alpha-1}}{1+x}>0
\]
and
\[
 R_f(x)
=
\frac{x}{1+x}\left(\frac{\alpha-1}{L}-1\right).
\]
Writing $r=\log(1+x)$, so that $x/(1+x)=1-e^{-r}$ and $L=A+r$, gives
\[
R_f(x)=(1-e^{-r})\left(\frac{\alpha-1}{A+r}-1\right).
\]
For the upper bound, if $\alpha\le1$ then $R_f(x)\le0$. If $1<\alpha\le3$, then $\alpha-1\le2$, and for $r>0$,
\[
R_f(x)
\le (1-e^{-r})\frac{\alpha-1}{A+r}
\le 2\frac{1-e^{-r}}{r}
\le2.
\]
The case $r=0$ follows by continuity. For the lower bound, if $\alpha\ge1$, then $R_f(x)\ge -(1-e^{-r})\ge-1$. If $0<\alpha<1$, then, using $1-\alpha<1$ and $A+r\ge r$,
\[
R_f(x)
=-(1-e^{-r})\left(1+\frac{1-\alpha}{A+r}\right)
\ge
-(1-e^{-r})\left(1+\frac1r\right)
\ge -2,
\]
again with the case $r=0$ understood by continuity. Thus $f\in\Qge(\R_+)$.

\paragraph{Logarithmic power growth.}
Let
\[
 f(x)=a+b\log(1+x^p),
 \qquad b>0.
\]
For $p=1$ or $2\le p\le3$, this function is $C^2$ on $\R_+$. For $x>0$,
\[
 f'(x)=b\frac{p x^{p-1}}{1+x^p}>0
\]
and
\[
 R_f(x)=p-1-\frac{p x^p}{1+x^p}.
\]
Since $0\le x^p/(1+x^p)\le1$, we obtain
\[
 -1\le R_f(x)\le p-1\le2.
\]
The inequalities at $x=0$ follow by continuity. Hence $f\in\Qge(\R_+)$.

\paragraph{Saturating rational-power functions.}
Let
\[
 f(x)=a+b\Bigl(1-(1+\lambda x)^{-p}\Bigr),
 \qquad b>0,
\]
where $\lambda>0$ and $0<p\le1$. Then
\[
 f'(x)=bp\lambda(1+\lambda x)^{-p-1}>0
\]
and
\[
 R_f(x)=-(p+1)\frac{\lambda x}{1+\lambda x}.
\]
Since $0\le \lambda x/(1+\lambda x)\le1$ and $0<p\le1$,
\[
 -2\le R_f(x)\le0.
\]
Thus $f\in\Qge(\R_+)$.

\paragraph{Inverse trigonometric and hyperbolic functions.}
For
\[
 f(x)=a+b\arctan(\lambda x),
 \qquad b>0,
\]
we have
\[
 f'(x)=\frac{b\lambda}{1+\lambda^2x^2}>0,
 \qquad
 R_f(x)=-\frac{2\lambda^2x^2}{1+\lambda^2x^2}\in[-2,0].
\]
Thus $f\in\Qge(\R_+)$. For
\[
 f(x)=a+b\operatorname{arsinh}(\lambda x),
 \qquad b>0,
\]
we have
\[
 f'(x)=\frac{b\lambda}{\sqrt{1+\lambda^2x^2}}>0,
 \qquad
 R_f(x)=-\frac{\lambda^2x^2}{1+\lambda^2x^2}\in[-1,0].
\]
Hence this family also lies in $\Qge(\R_+)$.

\paragraph{Smooth square-root functions.}
Let
\[
 f(x)=a+b\bigl(\sqrt{A+x^2}-\sqrt A\bigr),
 \qquad b>0,
\]
where $A>0$. Then
\[
 f'(x)=b\frac{x}{\sqrt{A+x^2}}\ge0,
 \qquad
 f''(x)=b\frac{A}{(A+x^2)^{3/2}}\ge0.
\]
For $x>0$,
\[
 R_f(x)=\frac{A}{A+x^2}\in[0,1].
\]
At $x=0$, both inequalities follow directly from
$xf''(0)=0$ and $f'(0)=0$. Thus $f\in\Qge(\R_+)$.

\subsubsection{Closure under sums and positive affine transformations.}\label{sec:closure}
The class $\Qge(\R_+)$ is closed under nonnegative finite sums and positive
affine transformations. Thus, if $g_1,\ldots,g_m\in\Qge(\R_+)$, $b_r\ge0$,
$\rho>0$, and $\beta\in\R$, then
\[
 x\mapsto \rho\sum_{r=1}^m b_r g_r(x)+\beta
\]
also belongs to $\Qge(\R_+)$.

For the proof, let $g_1,\ldots,g_m\in\Qge(\R_+)$, $b_r\ge0$, $\rho>0$, and $\beta\in\R$, and set
\[
 g(x):=\rho\sum_{r=1}^m b_rg_r(x)+\beta.
\]
Then
\[
 g'(x)=\rho\sum_{r=1}^m b_rg_r'(x)\ge0,
\]
\[
2g'(x)+xg''(x)
=
\rho\sum_{r=1}^m b_r\bigl(2g_r'(x)+xg_r''(x)\bigr)
\ge0,
\]
and
\[
2g'(x)-xg''(x)
=
\rho\sum_{r=1}^m b_r\bigl(2g_r'(x)-xg_r''(x)\bigr)
\ge0.
\]
Therefore $g\in\Qge(\R_+)$.

\subsection{Proof of Lemma~\ref{lem:derivative-beta}}\label{sec:apx-lemmabeta}
\begin{proof}
Since $x^*\in \operatorname{relint}(K)$, there exists $\varepsilon>0$
such that
\[
K\cap B_\varepsilon(x^*)=
\operatorname{aff}(K)\cap B_\varepsilon(x^*).
\]
Thus, locally around \(x^*\), the feasible set coincides with its affine
hull. Hence the optimization problem defining \(\beta^\eta\) is locally
equivalent to an equality-constrained strongly convex problem on
\(\operatorname{aff}(K)\).
Write
this affine hull as \(Ay=b\), so that its tangent space is \(T=\ker(A)\).  
After removing redundant equality constraints, we may assume that \(A\) has
full row rank. This does not change the feasible affine set nor its tangent
space.  For a parameter \(z\)
close to \(F(x^*)\), the point \(y=\beta^\eta(z)\) is the unique solution of a
strongly convex equality-constrained optimization problem.  
To see this, note that \(\beta^\eta(z)=\operatorname{proj}_K(-z/\eta)\), the map
\(\beta^\eta\) is \(1/\eta\)-Lipschitz. Hence, after shrinking the
neighbourhood of \(F(x^*)\) if necessary, \(\beta^\eta(z)\) remains in
\(K\cap B_\varepsilon(x^*)=\operatorname{aff}(K)\cap B_\varepsilon(x^*)\).
Thus locally the problem is equivalent to an equality-constrained
strongly convex optimization problem on \(\operatorname{aff}(K)\).
Standard
sensitivity analysis for equality-constrained strongly convex parametric
optimization problems therefore implies that \(\beta^\eta\) is continuously
differentiable near \(F(x^*)\); see, e.g.,
\cite[Section~6.3]{bertsekas2016nonlinear}; see also
\cite[Chapter~4]{bonnans2000perturbation}.

We use the following notation. If \(J:\mathbb{R}^d\to\mathbb{R}^k\) is
differentiable, then
\[
DJ(z)[\xi]
:=
\left.\frac{d}{dt}\right|_{t=0}J(z+t\xi)
=
\lim_{t\to 0}\frac{J(z+t\xi)-J(z)}{t}
\]
denotes the derivative of \(J\) at \(z\) applied to the direction \(\xi\).

The first-order KKT-optimality conditions for \(y=\beta_\eta(z)\) are
\begin{equation}\label{eq:KKT}
z+\eta\beta_\eta(z)+A^\top\lambda(z)=0,
\qquad
A\beta_\eta(z)=b.
\end{equation}
Since \(A\) has full row rank,
the Lagrange multiplier \(\lambda(z)\) associated with the equality constraint is also differentiable.
Differentiating~\eqref{eq:KKT} in the direction \(\xi\) yields
\[
\xi+\eta D\beta_\eta(z)[\xi]+A^\top D\lambda(z)[\xi]=0,
\qquad
A D\beta_\eta(z)[\xi]=0.
\]
Since \(\beta^\eta\) maps into the feasible set, its derivative maps into
\(T\).  Projecting onto \(T\) eliminates the multiplier term because
\(P_TA^\top=0\), and hence
\[
 D\beta^\eta(z)[\xi]
 =
 -\frac1\eta P_T\xi .
\]
Applying this at \(z=F(x^*)\) and using the chain rule for
\(\Phi^\eta=\beta^\eta\circ F\), we obtain, for every \(h\in T\),
\[
 D\Phi^\eta(x^*)[h]
 =
 -\frac1\eta P_TDF(x^*)[h].
\]
This is exactly
\[
 D\Phi^\eta(x^*)|_T=-\frac1\eta DF(x^*)|_T.
\]
Since \(G(x)=\Phi^\eta(x)-x\), \eqref{eq:linearization-reg-fw} follows.
\end{proof}